\theoremstyle{plain}
\newtheorem{thm}{Theorem}
\newtheorem{prop}[thm]{Proposition}
\newtheorem{nota}[thm]{Notation}
\newtheorem{rem}[thm]{Remark}
\newtheorem{defin}[thm]{Definition}
\newcommand{\R}{\mathbb{R}}
\newcommand{\N}{\mathbb{N}}
\newcommand{\C}{\mathbb{C}}
\def\multiset#1#2{\ensuremath{\left(\kern-.2em\left(\genfrac{}{}{0pt}{}{#1}{#2}\right)\kern-.2em\right)}}
\begin{document}

\title{A characterization of dissimilarity families  of trees}
\author{Agnese Baldisserri, Elena Rubei}
\date{}
\maketitle

\def\thefootnote{}
\footnotetext{ \hspace*{-0.56cm}
{\bf 2010 Mathematical Subject Classification: 05C05, 05C12, 05C22} 

{\bf Key words: weighted trees, dissimilarity families} }

\begin{abstract}
Let  ${\cal T}=(T,w)$ be a weighted  finite tree with leaves $1,..., n$.
For any  $I :=\{i_1,..., i_k \} \subset \{1,...,n\}$,
let $D_I ({\cal T})$ be
the weight of the minimal subtree of $T$ connecting $i_1,..., i_k$;
the  $D_{I} ({\cal T})$ are called 
$k$-weights of  ${\cal T}$. Given  a family 
of  real numbers parametrized by the $k$-subsets of $
\{1,..., n\}$, $\{D_I\}_{I  \in {\{1,...,n\} \choose k}}$,
  we say that a weighted tree ${\cal T}=(T,w)$ with leaves $1,..., n$ realizes the family 
if $D_I({\cal T})=D_I$ for any $ I  $.

In 2006 Levy, Yoshida and Pachter defined, for any 
  positive-weighted tree  ${\cal T}=(T,w)$ with $\{1,..., n\}$ as leaf set and any $i, j \in \{1,..., n\}$,
the numbers $S_{i,j}$ to be $ \sum_{Y \in {\{1,..., n\} -\{i,j\} \choose k-2}} D_{i,j ,Y}({\cal T})  $; they proved 
 that there exists a positive-weighted tree ${\cal T}' =(T',w')$   such that $D_{i,j}({\cal T}')=S_{i,j}$  for any $i,j \in \{1,..., n\}$ and that this new tree is, in some way, similar to the given one.
In this paper, by using the $S_{i,j}$ defined by Levy, Yoshida and Pachter,  we characterize families of real numbers parametrized by ${\{1,...,n\} \choose k}$ that are the families of $k$-weights of weighted trees with leaf set equal to $\{1,...., n\}$ and weights of the internal edges positive.
\end{abstract}

\section{Introduction}

For any graph $G$, let $E(G)$, $V(G)$ and $L(G)$ 
 be respectively the set of the edges,   
the set of the vertices and  the set of the leaves of $G$.
A {\bf weighted graph} ${\cal G}=(G,w)$ is a graph $G$ 
endowed with a function $w: E(G) \rightarrow \R$. 
For any edge $e$, the real number $w(e)$ is called the weight of the edge. If all the weights are nonnegative (respectively positive), 
we say that the graph is {\bf nonnegative-weighted} (respectively {\bf positive-weighted});
if  the weights  of 
the internal edges are nonzero,
 we say that the graph is {\bf internal-nonzero-weighted} and,
if the weights  of 
the internal edges are positive,
 we say that the graph is {\bf internal-positive-weighted}.
For any finite subgraph $G'$ of  $G$, we define  $w(G')$ to be the sum of the weights of the edges of $G'$. 
In this paper we will deal only with weighted finite trees.

\begin{defin}
Let ${\cal T}=(T,w) $ be a weighted tree. 
For any distinct $ i_1, .....,i_k \in V(T)$,
 we define $ D_{\{i_1,...., i_k\}}({\cal T}) $ to be the weight of the minimal 
subtree containing $i_1,....,i_k$. We call this subtree ``the subtree realizing  $ D_{\{i_1,...., i_k\}}({\cal T}) $''.
 More simply, we denote 
$D_{\{i_1,...., i_k\}}({\cal T})$ by
$D_{i_1,...., i_k}({\cal T})$ for any order of $i_1,..., i_k$.  
We call  the  $ D_{i_1,...., i_k}({\cal T})$ 
the {\bf $k$-weights} of ${\cal T}$ and
 we call  a $k$-weight of ${\cal T}$ for some $k$  a {\bf multiweight}
 of ${\cal T}$.
\end{defin}


 If $S $ is a subset of $V(T)$, the $k$-weights $D_{i_1,\ldots,  i_k}({\cal T})$ with $i_1, \ldots, i_k \in S$ give
a vector in $\mathbb{R}^{  S \choose k}$. This vector is called 
$k${\bf -dissimilarity vector} of $({\cal T}, S)$.
Equivalently,  we can speak 
of the {\bf family of the $k$-weights}  of $({\cal T}, S)$ or of the {\bf  $k$-dissimilarity family of $({\cal T}, S)$.}   

If $S$ is a finite set, $k \in \N$ and $k < \# S$,
we say that a family of  real numbers
 $\{D_{I}\}_{I \in {S \choose k}}$  is {\bf treelike} (respectively p-treelike, nn-treelike, 
inz-treelike, ip-treelike) if there exist a weighted  (respectively 
positive-weighted,  nonnegative-weighted, internal-nonzero-weighted, 
internal-positive-weighted)  tree
 ${\cal T}=(T,w)$ and a subset $S$ 
of the set of its vertices such that $ D_{I}({\cal T}) = D_{I}$  for any 
 $k$-subset $I$ of $ S$. 
In this case, we say also that  ${\cal T}$   realizes the family $\{D_I\}_{I  \in {S \choose k}}$. 
If in addition 
$S \subset L(T)$,
we say that the family
 is  {\bf l-treelike} (respectively p-l-treelike, nn-l-treelike, inz-l-treelike,
  ip-l-treelike).

Weighted graphs have applications in several disciplines, such as 
biology and psychology. Phylogenetic trees are weighted graphs
whose vertices represent  species and the weight of an edge is given 
by how much the DNA sequences of the species represented by the vertices of the edge differ. Dissimilarity families arise naturally also in 
 psychology, see for instance the introduction in \cite{C-S}.
There is a wide literature concerning graphlike dissimilarity families
and treelike dissimilarity families,
in particular concerning methods to reconstruct weighted trees from their dissimilarity families; these methods, for instance the so-called neighbor-joining method, are used by biologists to reconstruct phylogenetic trees. 
See  for example \cite{N-S}, \cite{S-K} and   \cite{Dresslibro}, \cite{S-S} for overviews. We recall the most important results concerning  treelike dissimilarity families.

A criterion for a metric on a finite set to be nn-l-treelike
was established in  \cite{B}, \cite{SimP}, \cite{Za}: 

\begin{defin}
Let $\{D_{I}\}_{I \in {\{1,...,n\} \choose 2}}$ be a family of positive real numbers. We say that the $D_I$  satisfy  the {\bf  $4$-point condition} if and only if 
for all distinct $a,b,c,d  \in \{1,...,n\}$,
the maximum of $$\{D_{a,b} + D_{c,d},D_{a,c} + D_{b,d},D_{a,d} + D_{b,c}
 \}$$ is attained at least twice. 
\end{defin}

\begin{thm} \label{Bune} 
Let $\{D_{I}\}_{I \in {\{1,...,n\} \choose 2}}$ be a family of positive real numbers satisfying the triangle inequalities.
It is p-treelike (or nn-l-treelike) 
 if and only if the  $4$-point condition holds.
\end{thm}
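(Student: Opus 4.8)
The plan is to prove both implications, with the forward (necessity) direction being a direct computation and the converse (sufficiency) an induction on $n$ whose inductive step carries the real work.

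For necessity, suppose $\mathcal{T}=(T,w)$ is a positive-weighted tree realizing the family and fix four distinct elements $a,b,c,d$. The minimal subtree of $T$ connecting them is a quartet: up to relabelling there are pendant paths of weights $p_a,p_b,p_c,p_d$ joining $a,b,c,d$ to two (possibly coincident) internal vertices $u,v$ linked by a central path of weight $m\ge 0$, with say $a,b$ meeting at $u$ and $c,d$ at $v$. Reading off the $2$-weights gives $D_{a,b}+D_{c,d}=p_a+p_b+p_c+p_d$, while $D_{a,c}+D_{b,d}=D_{a,d}+D_{b,c}=p_a+p_b+p_c+p_d+2m$. Hence two of the three sums coincide and dominate the third, which is exactly the $4$-point condition; when $m=0$ all three agree. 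This settles one direction.

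For sufficiency I would argue by induction on $n$, constructing a tree from a family $D$ satisfying the triangle inequalities and the $4$-point condition. For the base case $n=3$ one takes the star with a single internal vertex $o$ and pendant edges of weights $\tfrac12(D_{1,2}+D_{1,3}-D_{2,3})$, $\tfrac12(D_{1,2}+D_{2,3}-D_{1,3})$, $\tfrac12(D_{1,3}+D_{2,3}-D_{1,2})$; the triangle inequalities make these nonnegative, and positive in the strict case. For the inductive step, restrict $D$ to $X'=\{1,\dots,n-1\}$, obtain by the inductive hypothesis a tree $T'$ realizing it, and then attach the removed point $n$. The location of the attachment point $p$ on $T'$ and the weight $\ell$ of the new pendant edge are forced by the medians: setting $(i\mid j)_n=\tfrac12(D_{i,n}+D_{j,n}-D_{i,j})$ one takes $\ell=\min_{i,j\in X'}(i\mid j)_n$, and a pair $(i_0,j_0)$ realizing the minimum pins $p$ onto the $i_0$–$j_0$ path of $T'$ at distance $\tfrac12(D_{i_0,j_0}+D_{i_0,n}-D_{j_0,n})$ from $i_0$. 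One then hangs $n$ from $p$ by an edge of weight $\ell$, subdividing an edge of $T'$ if $p$ is interior to one.

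The main obstacle is the verification step: one must show that this single, locally-determined attachment reproduces every remaining $2$-weight, i.e. that $D_{k,n}$ equals the weight of the $k$–$p$ path in $T'$ plus $\ell$ for all $k\in X'$, and that the choice of minimizing pair is immaterial. This is precisely where the $4$-point condition enters: applying it to each quartet $\{i_0,j_0,k,n\}$ converts the defining relations for $p$ and $\ell$ into the required identity for $D_{k,n}$. I also expect to spend care on the degenerate configurations—when $p$ coincides with an existing vertex, when the new edge or a subdivided edge acquires weight $0$, and when $T'$ carries suppressible degree-two vertices—since these are what distinguish the \emph{p-treelike} realization from the \emph{nn-l-treelike} one; contracting zero-weight edges and suppressing degree-two vertices reconciles the two formulations and yields the stated equivalence.
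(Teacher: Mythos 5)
The paper itself offers no proof of this theorem: it is quoted as a classical result with references to Buneman, Simoes Pereira and Zaretskii (\cite{B}, \cite{SimP}, \cite{Za}), so there is no in-paper argument to compare against. Judged on its own, your sketch is the standard classical proof and is essentially correct: the necessity computation via the quartet topology is exactly right, and the inductive attachment via Gromov products is the textbook construction. The one step you leave as a promissory note --- that the four-point condition on $\{i_0,j_0,k,n\}$ forces $D_{k,n}=d_{T'}(k,p)+\ell$ --- does close, and it is worth recording how: minimality of $(i_0\mid j_0)_n$ rewrites as $D_{j_0,n}+D_{i_0,k}\le D_{k,n}+D_{i_0,j_0}$ and $D_{i_0,n}+D_{j_0,k}\le D_{k,n}+D_{i_0,j_0}$, so $D_{k,n}+D_{i_0,j_0}$ is the weak maximum of the three sums, and the four-point condition then pins it to $\max\left(D_{j_0,n}+D_{i_0,k},\, D_{i_0,n}+D_{j_0,k}\right)$; on the tree side, comparing the position $x=\tfrac12(D_{i_0,j_0}+D_{i_0,n}-D_{j_0,n})$ of $p$ with the position $\alpha=\tfrac12(D_{i_0,j_0}+D_{i_0,k}-D_{j_0,k})$ of the median of $(i_0,j_0,k)$ gives $d_{T'}(k,p)+\ell=D_{i_0,n}+D_{j_0,k}-D_{i_0,j_0}$ when $\alpha\le x$ (and symmetrically otherwise), which matches the maximum identified above precisely because $\alpha\le x$ is equivalent to $D_{i_0,k}+D_{j_0,n}\le D_{i_0,n}+D_{j_0,k}$. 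Two small remarks: you do not actually need the minimizing pair to be ``immaterial'' --- fixing one minimizing pair and verifying all $D_{k,n}$ suffices; and the triangle inequalities are what guarantee $\ell\ge 0$ and $0\le x\le D_{i_0,j_0}$, so they should be invoked explicitly at those points. Your closing observation --- contract zero-weight edges for the p-treelike realization (allowing labels at internal vertices), keep them as zero-weight twigs for the nn-l-treelike one --- is exactly the right reconciliation of the two formulations in the statement.
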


Also the study of general weighted trees can be interesting and,   
in \cite{B-S}, Bandelt and Steel proved a result, analogous to
Theorem \ref{Bune}, for general weighted trees:

\begin{thm} \label{B-S} {\bf (Bandelt-Steel)}
 For any family of real numbers $\{D_{I}\}_{I \in {\{1,...,n\} \choose 2}}$,
 there exists a weighted tree ${\cal T}$ with leaves $1,...,n$
such that $ D_{I} ({\cal T})= D_{I}$  for any $I \in {\{1,...,n\} \choose 2}$  if and only  if the so-called relaxed $4$-point condition holds, i.e. for any $a,b,c,d \in  \{1,...,n\}$, at least two among 
 $ D_{a,b} + D_{c,d},\;\;D_{a,c} + D_{b,d},\;\; D_{a,d} + D_{b,c}$
are equal.
\end{thm}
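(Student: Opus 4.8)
The plan is to prove the two implications separately: necessity is a short computation on a single quartet, while sufficiency requires an explicit construction by induction.

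For necessity, suppose a weighted tree $\mathcal{T}$ realizes the family and fix four leaves $a,b,c,d$. Their minimal connecting subtree has exactly these four as its leaves, so after suppressing degree-two vertices it is either a star or a quartet with two branch points $p,q$ joined by a central path, where (after relabelling) $a,b$ meet at $p$ and $c,d$ at $q$. Writing $\alpha,\beta,\gamma,\delta$ for the weights of the four pendant paths and $m$ for the weight of the central path $p$–$q$ (all allowed to be negative), a direct computation gives $D_{ac}+D_{bd}=D_{ad}+D_{bc}=\alpha+\beta+\gamma+\delta+2m$ while $D_{ab}+D_{cd}=\alpha+\beta+\gamma+\delta$. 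Hence at least two of the three sums coincide (all three when $m=0$), which is the relaxed four-point condition; the star case is the limit $m=0$.

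For sufficiency I would argue by induction on $n$. For $n\le 3$ the condition is vacuous and any family is realized by a tripod with center $v$ and pendant weights $\tfrac12(D_{ij}+D_{ik}-D_{jk})$, which may be negative. For the step ($n\ge 4$), assume $\mathcal{T}'=(T',w')$ realizes $\{D_{ij}\}_{i,j\le n-1}$ and attach leaf $n$. Set $\phi(i,j):=\tfrac12(D_{in}+D_{jn}-D_{ij})$ and $\ell:=\min_{i,j\le n-1}\phi(i,j)$, attained at a pair $(i_0,j_0)$; take $\ell$ as the pendant weight of $n$ and let $p$ be the point of the $i_0$–$j_0$ path with $d_{T'}(i_0,p)=D_{i_0n}-\ell$. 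Writing $m$ for the median of $i,i_0,j_0$ in $T'$, the inequalities $\phi(i,i_0)\ge\ell$ and $\phi(i,j_0)\ge\ell$ combine to the lower bound $D_{in}\ge \ell+d_{T'}(p,i)$ for every $i$; and applying the relaxed four-point condition to the quartet $\{i,i_0,j_0,n\}$ shows its two ``crossing'' sums differ by $2\,d_{T'}(m,p)$, which together with the lower bound forces equality $D_{in}=\ell+d_{T'}(p,i)$ whenever $m\neq p$. Attaching $n$ at $p$, subdividing an edge of $T'$ if necessary, then realizes all these $D_{in}$.

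The main obstacle is the degenerate case $m=p$, i.e. when a leaf $i$ attaches to the $i_0$–$j_0$ path exactly at $p$: there the two crossing sums of $\{i,i_0,j_0,n\}$ are automatically equal, the relaxed condition imposes no further constraint, and the lower bound need not be an equality. This is precisely where the relaxed condition differs from the strong four-point condition of Theorem \ref{Bune}: a strictly positive excess $D_{in}-(\ell+d_{T'}(p,i))$ signals that $n$, together with the leaves coincident with it at $p$, must be pulled off $p$ onto a new internal vertex joined to $p$ by an edge whose weight is half the (negative of the) excess, hence possibly negative. Since such negative internal edges are genuinely unavoidable, one cannot bypass this by reducing to the positive case via a shift $D_{ij}\mapsto D_{ij}+t_i+t_j$, because that shift adds the same constant to all three quartet sums and so cannot move the distinguished sum from being the maximum to being the minimum. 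I would therefore resolve the degenerate case by treating the leaves coincident at $p$ together with $n$ as a smaller subfamily, verifying via the relaxed four-point condition that it is itself realizable and grafting the resulting local subtree at $p$; checking that this grafting keeps all distances to the remaining leaves correct is the delicate heart of the argument.
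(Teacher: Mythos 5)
You should first know that the paper does not prove Theorem \ref{B-S} at all: it quotes it from Bandelt and Steel \cite{B-S}, who establish it (in the wider generality of weights in a cancellative abelian monoid) by an induction organized quite differently, so your attempt has to be judged on its own merits. Your necessity direction is complete and correct. The sufficiency direction, however, contains genuine gaps, all stemming from one source: when weights may be negative, $d_{T'}$ is not a metric, and your construction repeatedly imports metric intuition at exactly the points where the theorem goes beyond the four-point condition of Theorem \ref{Bune}.

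Concretely: (i) your forcing step is false as stated. The two crossing sums of the quartet $\{i,i_0,j_0,n\}$ differ by $2d_{T'}(m,p)$, but with real weights $m\neq p$ does not imply $d_{T'}(m,p)\neq 0$, and, worse, when $d_{T'}(m,p)<0$ the other equality permitted by the relaxed condition gives $D_{in}=\ell+d_{T'}(p,i)-2d_{T'}(m,p)$, which is \emph{strictly larger} than your lower bound $\ell+d_{T'}(p,i)$ and hence perfectly compatible with it; nothing then forces $D_{in}=\ell+d_{T'}(p,i)$. Your dichotomy tacitly assumes $d_{T'}(m,p)>0$, i.e. positivity of internal path weights, which is precisely what is not available. (ii) Your point $p$ is not well defined: along the $i_0$--$j_0$ path the function $x\mapsto d_{T'}(i_0,x)$ is not monotone, the prescribed value $D_{i_0n}-\ell=\frac{1}{2}(D_{i_0n}-D_{j_0n}+D_{i_0j_0})$ need not lie in $[0,D_{i_0j_0}]$ (there is no triangle inequality to appeal to), and when it is attained it may be attained at several points having different distances to off-path leaves. (iii) The degenerate case you defer is the heart of the theorem, not a corner. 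For instance, $D_{12}=2$, $D_{13}=D_{23}=10$, $D_{14}=0$, $D_{24}=3$, $D_{34}=8$ satisfies the relaxed condition; your algorithm gives $\ell=-1$ at $(i_0,j_0)=(1,3)$ and places $p$ at the center of the tripod on $\{1,2,3\}$, where leaf $2$ also sits, with excess $D_{24}-(\ell+d_{T'}(p,2))=3$; a realization is the quartet with Buneman index $\langle 1,3\,|\,2,4\rangle$, internal edge of weight $-3/2$ (consistent with your ``half the negative of the excess''), in which leaf $2$ must be \emph{re-hung} with a new twig weight $5/2$ in place of its old weight $1$. So the grafting genuinely modifies the already-constructed tree and requires re-verifying all distances from the re-hung leaves to the remaining ones, using quartets not yet exploited --- exactly the step you label ``the delicate heart'' and do not carry out, and into which the leaves of case (i) with $d_{T'}(m,p)<0$ must also be absorbed. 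Until (i)--(iii) are resolved, your sufficiency direction is a plausible program rather than a proof.
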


An easy variant of the theorems above is the following:

\begin{thm} \label{Bune2} 
 For any family of real numbers $\{D_{I}\}_{I \in {\{1,...,n\} \choose 2}}$,
 there exists an internal-positive weighted tree ${\cal T}$ with leaves $1,...,n$
such that $ D_{I} ({\cal T})= D_{I}$  for any $I \in {\{1,...,n\} \choose 2}$  if and only  if the $4$-point condition holds.
\end{thm}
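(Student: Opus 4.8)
The plan is to prove the two implications separately, reducing the nontrivial direction to Buneman's criterion (Theorem \ref{Bune}) by a uniform translation of the distances. Throughout, the substantive case is $n\ge 4$, since for $n\le 3$ every family is realized by a tree without internal edges while the $4$-point condition is vacuous, so both sides hold trivially.

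For the ``only if'' direction I would argue locally on quartets. Given an internal-positive-weighted tree $\mathcal{T}$ realizing the family and four distinct leaves $a,b,c,d$, I would look at the minimal subtree connecting them: as $a,b,c,d$ are leaves of $T$, this subtree has exactly four leaves and therefore either a single branch vertex or two branch vertices $u,v$ joined by a path $P$, with two of the four leaves meeting at each. In the degenerate first case the three pairwise sums $D_{a,b}+D_{c,d}$, $D_{a,c}+D_{b,d}$, $D_{a,d}+D_{b,c}$ all coincide. In the second case, say with $a,b$ meeting at $u$ and $c,d$ at $v$, a one-line computation would give
$$D_{a,c}+D_{b,d}=D_{a,d}+D_{b,c}=\bigl(D_{a,b}+D_{c,d}\bigr)+2\,w(P).$$
The key observation is that every edge of $P$ joins two non-leaf vertices and is thus internal, so $w(P)>0$; hence the maximum of the three sums is attained (in fact exactly twice), which is the $4$-point condition.

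For the ``if'' direction I would reduce to Theorem \ref{Bune}. First I would choose a constant $c$ large enough that $D_{i,j}+c>0$ and $D_{i,k}-D_{i,j}-D_{j,k}\le c$ for all indices, and set $\tilde D_{i,j}:=D_{i,j}+c$. Then $\{\tilde D_{i,j}\}$ consists of positive numbers satisfying the triangle inequalities, and, since every pairwise sum increases by the same amount $2c$, it still satisfies the $4$-point condition. Theorem \ref{Bune} would then furnish a nonnegative-weighted tree $\tilde{\mathcal{T}}$ with leaves $1,\dots,n$ realizing $\{\tilde D_{i,j}\}$. To return to the original family I would contract every internal edge of weight $0$ — an operation that changes neither the distances nor the leaf set and makes all remaining internal edges strictly positive — and then subtract $c/2$ from each of the $n$ pendant edges; since the path between any two distinct leaves uses exactly their two pendant edges, this lowers every $\tilde D_{i,j}$ by $c$ while leaving the internal weights (and hence their positivity) untouched, producing an internal-positive-weighted tree realizing $\{D_{i,j}\}$ (the pendant weights may turn negative, which is permitted).

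The main obstacle I anticipate lies in this last step: one must see that the full $4$-point condition, and not merely the relaxed condition of Theorem \ref{B-S}, is exactly what is needed. Its role is to guarantee, through Theorem \ref{Bune}, that $\tilde D$ is realized with \emph{nonnegative} internal edges; a family satisfying only the relaxed condition could force a strictly negative internal length on some quartet, which no adjustment of pendant edges could repair. Checking that contraction of the zero-weight internal edges really leaves an internal-positive tree, and that the pendant translation preserves this, is the delicate bookkeeping that completes the argument.
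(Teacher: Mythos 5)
Your proof is correct, but for the substantive (``if'') direction you take a genuinely different route from the paper. The paper passes through the Bandelt--Steel result: since the $4$-point condition implies the relaxed $4$-point condition, Theorem \ref{B-S} yields a tree with arbitrary real weights realizing the $D_{i,j}$; it then asserts (``it is easy to see'') that the full $4$-point condition forces the internal weights of that tree to be nonnegative, and contracts the zero-weight edges. You instead reduce to Buneman's criterion (Theorem \ref{Bune}) by translating: choosing $c$ large enough that $\tilde D_{i,j}=D_{i,j}+c$ is positive and satisfies the triangle inequalities, noting the $4$-point condition is translation-invariant (each pairwise sum shifts by $2c$), obtaining a nonnegative-weighted realization, contracting zero-weight internal edges, and subtracting $c/2$ from each pendant edge. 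What your route buys is that the nonnegativity of the internal weights is imported wholesale from Theorem \ref{Bune} rather than left as an unproved ``easy to see'' claim; the price is the translation bookkeeping, which you handle correctly (the pendant adjustment changes each leaf-to-leaf distance by exactly $c$ and leaves internal weights, hence their positivity, untouched, and negative pendant weights are permitted in an internal-positive-weighted tree). You also prove the ``only if'' direction explicitly, which the paper omits altogether. One cosmetic imprecision: to see that the bridge $P$ consists of internal edges you say each of its edges ``joins two non-leaf vertices,'' but under the paper's twig-based definition an edge between two non-leaf vertices can still lie on a twig when degree-$2$ vertices occur; the correct observation is that removing any edge of $P$ from $T$ leaves $a,b$ on one side and $c,d$ on the other, so both sides contain at least two leaves and the edge lies on no twig. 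This does not affect the validity of the argument.
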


In fact, if the $4$-point condition holds, in particular the relaxed $4$-point condition holds, so by Theorem \ref{B-S}, there exists a weighted tree ${\cal T}$  with leaves $1,...,n$ and with $2$-weights equal to the $D_I$; it is easy to see that, since the $4$-point condition holds, the weights of the internal edges of  ${\cal T}$ are nonnegative; by contracting the edges of weight $0$, we get an ip-weighted tree  with leaves $1,...,n$ and with $2$-weights equal to the $D_I$.

For higher $k$ the literature is more recent, see  \cite{B-R},  \cite{BC},
 \cite{H-H-M-S},   \cite{Iri}, \cite{L-Y-P},  \cite{Man}, \cite{P-S}, \cite{Ru1}, \cite{Ru2}.
Three of the most important results for higher $k$ are the following:

\begin{thm} {\bf  (Herrmann, Huber, Moulton, Spillner, \cite{H-H-M-S})}.
If $n \geq 2k$, a family  of positive real numbers 
$ \{D_{I}\}_{I \in {\{1,..., n\} \choose k}}$ is ip-l-treelike 
if and only if its restriction to every $2k$-subset of $\{1,...,n\}$ is 
ip-l-treelike.   
\end{thm}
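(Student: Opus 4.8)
The plan is to prove the easy implication by restriction and the substantive implication by reconstructing a single global tree from the locally given ones. One direction is immediate: if $\mathcal{T}=(T,w)$ is an internal-positive-weighted tree with $\{1,\dots,n\}\subseteq L(T)$ realizing $\{D_I\}$, then for any $2k$-subset $A$ the minimal subtree of $T$ spanning $A$ (with degree-two vertices suppressed) is again internal-positive-weighted, has $A$ as its leaf set, and realizes the restricted family $\{D_I\}_{I\subseteq A}$; hence every $2k$-restriction is ip-l-treelike.

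For the converse, assume every $2k$-subset $A\subseteq\{1,\dots,n\}$ carries an ip-tree $T_A$ realizing $\{D_I\}_{I\in\binom{A}{k}}$. The key tool I would isolate first is a \emph{determination lemma}: an ip-l-treelike family of $k$-weights on a leaf set of size at least $2k-1$ is realized by a unique ip-tree up to isometry; equivalently, the pairwise distances $d_{ij}$ are recovered from the $k$-weights by a fixed formula (for instance by inverting the cyclic/TSP identity $2D_I=\max_\sigma \sum_j D_{\text{consecutive}}$, or by an inclusion--exclusion ``second difference'' of the Steiner weights). Granting this, I set $d^A_{ij}$ to be the distance between $i$ and $j$ in $T_A$ for each $2k$-subset $A\ni i,j$, and prove these agree. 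Given two such $A,A'$, I morph $A$ into $A'$ by swapping one element at a time while always retaining $i,j$; consecutive sets meet in a $(2k-1)$-subset $C\ni i,j$, on which $T_A|_C$ and $T_{A'}|_C$ realize the same family and hence are isometric by the determination lemma, so $d^A_{ij}=d^{A'}_{ij}$. This yields a well-defined global dissimilarity $\{d_{ij}\}$.

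Next I would check the four-point condition for $\{d_{ij}\}$: any four leaves lie in a common $2k$-subset $A$ (possible since $n\ge 2k$), and there $d_{ij}=d^A_{ij}$ are genuine tree distances in $T_A$, so the condition holds on them. By Theorem \ref{Bune2} there is an internal-positive-weighted tree $T$ with leaves $1,\dots,n$ and $D_{ij}(T)=d_{ij}$ for all pairs. It remains to see that $T$ reproduces all the $k$-weights. For a $k$-subset $I$, pick a $2k$-subset $A\supseteq I$; then $D_{ij}(T)=d_{ij}=d^A_{ij}=D_{ij}(T_A)$ for all pairs in $A$, and since the $k$-weight of $I$ is determined by the metric it induces on $I$, we get $D_I(T)=D_I(T_A)=D_I$.

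The main obstacle is the determination lemma and, with it, the precise role of the bound $2k$. The number $2k$ is exactly what is needed to run both the four-point check (four distinguished leaves) and the consistency/reconstruction, which requires overlaps of size $2k-1$ -- one less than the subset size -- so that a single element can be swapped while still pinning down $d_{ij}$. Proving that $k$-weights determine the pairwise metric already at $2k-1$ leaves, and exhibiting the explicit linear recovery of $d_{ij}$, is the computational heart; everything else is assembly from Theorem \ref{Bune2} together with the uniqueness of the reduced tree realizing a given tree metric.
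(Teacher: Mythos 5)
First, a point of reference: the paper does not prove this statement at all --- it is imported verbatim from \cite{H-H-M-S} --- so your proposal can only be judged on its own merits and against the tools this paper makes available. Your architecture is the right one: restriction for the easy direction (note this needs the small observation that an internal edge of $T|_A$ separates two pairs of leaves of $A$ and hence is internal in $T$, so the restriction of an ip-weighted tree, after suppressing degree-2 vertices, is again ip-weighted); then a determination lemma at $2k-1$ leaves, a one-element-swap chain of $2k$-sets containing $\{i,j\}$ to obtain a well-defined global $d_{ij}$, the $4$-point condition verified inside a common $2k$-set, Theorem \ref{Bune2} to produce the global ip-tree, and recovery of the $k$-weights from the induced metric.

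The genuine gap is the one you flag yourself, and your two suggested routes to the determination lemma do not work as stated: the overlap size $2k-1$ is exactly the boundary case $k=(n'+1)/2$ of Theorem \ref{PatSp}, which is precisely where explicit linear or cyclic-tour inversion of the $k$-weights into pairwise distances breaks down (and Theorem \ref{PatSp} is anyway stated for positive-weighted trees, whereas your $T_A$ are only ip-weighted, with possibly negative twig weights). The gap is, however, fillable from this very paper: any bipartition of a $(2k-1)$-set has a part of cardinality at least $k$, so \emph{every} tree with $2k-1$ leaves is a pseudostar of kind $(2k-1,k)$; hence Theorem \ref{main} (applicable since $3\le k\le (2k-1)-1$ for $k\ge 3$; the case $k=2$ is classical) gives uniqueness of the essential internal-nonzero-weighted realization of $\{D_I\}_{I\in\binom{C}{k}}$, which is exactly the isometry $T_A|_C\cong T_{A'}|_C$ your swap argument requires --- uniqueness, not an inversion formula, is all you need. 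Finally, your closing step (``the $k$-weight of $I$ is determined by the metric it induces on $I$'') needs care in the ip setting, since twig weights may be negative; either justify the cyclic-tour identity by noting that pendant paths of $T|_I$ are traversed exactly twice by every tour while only the internal (hence positive-weight) edges have variable even multiplicity $\ge 2$, or, more cleanly, invoke uniqueness of essential ip realizations of the $2$-weights on $A$ to conclude $T|_A\cong T_A$ as weighted trees and read off $D_I(T)=D_I(T_A)=D_I$ directly. With these repairs your proof closes.
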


\begin{thm} \label{PatSp} {\bf (Pachter-Speyer, \cite{P-S})}. Let $ k ,n  \in \mathbb{N}$ with
$3 \leq  k \leq (n+1)/2$.  A positive-weighted tree
 ${\cal T}$ with leaves $1,...,n$ and no vertices of degree 2
is determined by the values $D_I({\cal T})$, where $ I $ varies in  
${\{1,...,n\} \choose k }$.
\end{thm}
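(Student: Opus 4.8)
The plan is to reduce the statement to the classical fact that a positive-weighted tree without vertices of degree $2$ is determined by its $2$-weights, i.e.\ by the tree metric it induces on $\{1,\dots,n\}$: once the pairwise weights $D_{i,j}(\mathcal T)$ are known, the topology of $T$ is forced by the $4$-point condition (Theorem \ref{Bune} and the reconstruction underlying it) and the edge weights are then read off from the metric. So the whole content is to show that the family $\{D_I(\mathcal T)\}_{I\in\binom{[n]}{k}}$ determines the family $\{D_{i,j}(\mathcal T)\}$.

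First I would record the one genuinely linear identity available, valid for every tree: for three leaves $x,y,z$ one has $D_{x,y,z}=\tfrac12(D_{x,y}+D_{x,z}+D_{y,z})$, because the subtree spanning three leaves is a tripod whose total length is half the sum of the three pairwise distances. The temptation is to seek an analogous \emph{universal} linear formula expressing every $k$-weight through the $2$-weights and then invert it; this is exactly where the difficulty lies. For $k\ge 4$ the Steiner length $D_I$ is \emph{not} a linear function of the pairwise distances: it involves the quartet (median) structure through minima, and a direct computation on a small caterpillar shows that the naive alternating combinations of $k$-weights that work for $k=3$ fail. Hence one cannot simply invert a fixed linear map, and the topology must be extracted before the lengths.

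The key step, and the place where the hypothesis $k\le(n+1)/2$ enters, is the recovery of every quartet of $T$ (hence the whole topology) from the $k$-weights. Fix four leaves $a,b,c,d$ and a background set $F$ of size $k-2$ disjoint from them. Contracting the subtree $\Sigma_F$ spanning $F$ to a single vertex $\rho$ turns $\mathcal T$ into a tree $\mathcal T_F$ in which $D_{F,x,y}(\mathcal T)=w(\Sigma_F)+D_{\rho,x,y}(\mathcal T_F)$ for $x,y\notin F$; applying the tripod identity in $\mathcal T_F$, one finds that the three computable $k$-weight sums $D_{F,a,b}+D_{F,c,d}$, $D_{F,a,c}+D_{F,b,d}$, $D_{F,a,d}+D_{F,b,c}$ coincide, up to a common additive constant and the common factor $\tfrac12$, with the three $4$-point sums $D_{a,b}+D_{c,d}$, $D_{a,c}+D_{b,d}$, $D_{a,d}+D_{b,c}$ of $\mathcal T_F$. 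Therefore the $4$-point condition transfers: the strictly smallest of the three computable sums exhibits the quartet of $\mathcal T_F$ on $\{a,b,c,d\}$. Since contracting a subtree can only \emph{coarsen} a quartet (turn a resolved quartet into an unresolved one) and can never flip it to a different resolution, any resolution we read off is the true quartet of $\mathcal T$, and it remains to guarantee that every genuinely resolved quartet is exhibited by some choice of $F$. This is the counting input: cutting the central edge of the quartet splits the remaining $n-4$ leaves into two sides, and $n\ge 2k-1$ forces the larger side to contain at least $k-2$ leaves, so we may take $F$ entirely on that side; then $\Sigma_F$ misses the central edge, $\mathcal T_F$ still resolves the quartet, and we see it. Thus all quartets, and hence the topology of $T$, are determined.

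Finally, with the topology $T$ in hand, recovering the edge weights is a linear problem: each $k$-weight is $D_I=\sum_{e}\mathbf 1[I\text{ crosses the split of }e]\,w(e)$, so it suffices to prove that the split-crossing vectors of the distinct edges are linearly independent in $\mathbb R^{\binom{[n]}{k}}$. Writing the crossing indicator of a split $(A,B)$ as $\mathbf 1-u_A-u_B$ with $u_S(I)=\mathbf 1[I\subseteq S]$, this independence reduces to a rank statement for the inclusion matrices of the Johnson scheme, which holds precisely in the range $2k\le n+1$; inverting the resulting system yields the weights and completes the determination of $\mathcal T$. I expect the quartet-recovery step to be the main obstacle: the failure of linearity for $k\ge4$ means the argument cannot proceed by a single inversion, and the content of the bound $k\le(n+1)/2$ is exactly the combinatorial fact that for each resolved quartet a same-side padding set of size $k-2$ always exists.
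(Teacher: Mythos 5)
The paper does not actually prove this statement: Theorem \ref{PatSp} is quoted from Pachter--Speyer \cite{P-S} without proof, so your attempt can only be measured against the known argument. Your topology-recovery step is correct and is essentially the standard route: the contraction identity $D_{F,x,y}(\mathcal T)=w(\Sigma_F)+D_{\rho,x,y}(\mathcal T_F)$ is right, the three sums $D_{F,a,b}+D_{F,c,d}$, $D_{F,a,c}+D_{F,b,d}$, $D_{F,a,d}+D_{F,b,c}$ do differ by the $4$-point sums of $\mathcal T_F$, contraction can only coarsen (never flip) a quartet, and the count $n-4\geq 2(k-2)-1$ from $n\geq 2k-1$ correctly guarantees a same-side padding set $F$ of size $k-2$ avoiding a chosen bridge edge. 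That part is sound.

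The genuine gap is your final step. You assert that linear independence of the split-crossing vectors $\mathbf 1-u_{A_e}-u_{B_e}$ ``reduces to a rank statement for the inclusion matrices of the Johnson scheme,'' but no such reduction is given, and none can work at that level of generality: the vectors $u_S(I)=\mathbf 1[I\subseteq S]$ for an \emph{arbitrary} family of sets of size $\geq k$ are not linearly independent. For instance, if $M$ is any $(k+2)$-set, then every $k$-subset of $M$ lies in exactly two of the $(k+1)$-subsets of $M$, so $\sum_{S\in\binom{M}{k+1}}u_S=2\,u_M$, a dependency entirely within the admissible size range. What rescues independence for tree splits is precisely their pairwise compatibility (laminarity), which your reduction discards; note also that independence genuinely fails when some split has both sides of size $<k$ (then $v_e=\mathbf 1=\frac1k\sum_i v_{e_i}$), which is why $n\geq 2k-1$ --- equivalently the pseudostar condition of Theorem \ref{main} --- matters here and not some Johnson-scheme threshold. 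The fix is elementary and bypasses rank arguments altogether: for an internal edge $e$, choose $a,b,c,d$ with bridge exactly $e$ and $F$ of size $k-2$ entirely on one side of $e$; since $\Sigma_F$ is disjoint from the bridge, $w(e)=\tfrac12\left(D_{F,a,c}+D_{F,b,d}-D_{F,a,b}-D_{F,c,d}\right)$ is computed directly from the $k$-weights, and once all internal weights are known the twig weights follow by solving the triangular system of Remark \ref{wei} (differences $D_{j,X}-D_{i,X}$ give $w(e_j)-w(e_i)$, and any single $D_I$ then pins down each $w(e_i)$). With your quartet step this yields a complete proof; as written, the weight-recovery step rests on a claim that is false without the tree structure you omitted.
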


 \begin{thm} \label{LYP} {\bf (Levy-Yoshida-Pachter,  \cite{L-Y-P})} Let ${\cal T}=(T,w)$ be a positive-weighted tree 
  with $L(T)=\{1,..., n\}$. For any distinct $i ,j \in \{1,..., n\}$, define $$S_{i,j} = \sum_{Y \in {\{1,..., n\} -\{i,j\} \choose k-2}} D_{i,j ,Y} ({\cal T}). $$ Then there exists a positive-weighted tree ${\cal T}' =(T',w')$   such that 
$D_{i,j}({\cal T}')=S_{i,j}$  for all distinct  $i,j \in \{1,..., n\}$,
  the quartet system of $T'$ is contained in  the quartet system of $T$ and, defined $T_{\leq s}$ the subforest of $T$  whose edge set consists of edges whose removal results in one of the components having size at most $s$, we have 
  $T_{\leq n-k} \cong T'_{\leq n-k}$. 
  \end{thm}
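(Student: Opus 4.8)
The plan is to reduce the statement to the classical $2$-dissimilarity theory (Theorems \ref{B-S} and \ref{Bune}) by expanding each $S_{i,j}$ over the edges of $T$. Removing an edge $e$ of $T$ splits the leaf set into two blocks $A_e,B_e$ of cardinalities $a_e,b_e$ with $a_e+b_e=n$, and $e$ lies in the subtree realizing $D_{i,j,Y}(\mathcal{T})$ exactly when $\{i,j\}\cup Y$ meets both $A_e$ and $B_e$. Counting, for fixed $i,j$, the $(k-2)$-subsets $Y$ for which this occurs gives a coefficient $c_e(i,j)$ equal to $\binom{n-2}{k-2}$ when $i,j$ lie on opposite sides of $e$, and to $\binom{n-2}{k-2}-\binom{a_e-2}{k-2}$ (resp. $\binom{n-2}{k-2}-\binom{b_e-2}{k-2}$) when both lie in $A_e$ (resp. $B_e$), with the convention $\binom{m}{k-2}=0$ for $m<k-2$. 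First I would record the identity $S_{i,j}=\sum_{e\in E(T)} w(e)\,c_e(i,j)$.

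The second step is a decomposition separating the genuine tree part of $S$ from per-leaf corrections. Writing $\chi_e(i,j)$ for the indicator that $e$ separates $i$ and $j$, and using the elementary identity $\mathbb{1}_{A_e}(i)\mathbb{1}_{A_e}(j)=\tfrac12\big(\mathbb{1}_{A_e}(i)+\mathbb{1}_{A_e}(j)\big)-\tfrac12\chi_e(i,j)$ (and its analogue for $B_e$), the coefficient rearranges as $c_e(i,j)=\mu_e\,\chi_e(i,j)+p_e(i)+p_e(j)+\binom{n-2}{k-2}$, where $\mu_e=\tfrac12\big(\binom{a_e-2}{k-2}+\binom{b_e-2}{k-2}\big)\ge 0$ and $p_e$ depends on a single leaf only. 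Summing against $w(e)$ yields $S_{i,j}=D_{i,j}(T,w')+H(i)+H(j)$, where $w'(e)=w(e)\,\mu_e\ge 0$ and $H$ is a function of one leaf. Thus, up to the symmetric perturbation $H(i)+H(j)$, the family $S$ is the $2$-weight metric of the \emph{fixed} topology $T$ with the new nonnegative weights $w'$.

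Next I would check realizability. A perturbation of the form $H(i)+H(j)$ adds the same quantity $H(a)+H(b)+H(c)+H(d)$ to each of the three pairing sums on any four leaves, so it leaves the $4$-point condition (and more generally the isolation indices of all splits) unchanged; since $D_{i,j}(T,w')$ is a nonnegative tree metric it satisfies the $4$-point condition, hence so does $S$. Positivity $S_{i,j}>0$ is clear, and the triangle inequalities follow termwise from $D_{a,c,Y}(\mathcal{T})\le D_{a,b,Y}(\mathcal{T})+D_{b,c,Y}(\mathcal{T})$, since the union of the two realizing subtrees is connected and spans $\{a,c\}\cup Y$. By Theorem \ref{Bune} there is therefore a positive-weighted tree $T'$ with $D_{i,j}(T')=S_{i,j}$.

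Finally, the structural assertions. Because $H(i)+H(j)$ does not alter the comparison of the three pairing sums, the nontrivial splits carried by $T'$ are exactly those of $T$ on edges with $w'(e)>0$; equivalently, the internal edges of $T$ with both sides of size $<k$ are precisely the ones contracted in passing to $T'$. Here $\mu_e>0$ iff $\max(a_e,b_e)\ge k$, i.e. iff $\min(a_e,b_e)\le n-k$, i.e. (with size measured by number of leaves) iff $e\in T_{\le n-k}$. This matches the surviving splits with the edge set of $T_{\le n-k}$ and gives $T_{\le n-k}\cong T'_{\le n-k}$, while the inclusion of split systems immediately yields the containment of the quartet system of $T'$ in that of $T$. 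I expect the main obstacle to be exactly this last bookkeeping, namely computing the coefficients $c_e(i,j)$ correctly and proving the equivalence $\mu_e>0\Leftrightarrow e\in T_{\le n-k}$; everything else is a formal consequence of the $2$-weight theory already available.
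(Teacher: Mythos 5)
You should first be aware that the paper under review contains no proof of Theorem \ref{LYP}: it is quoted verbatim from \cite{L-Y-P}, so your argument can only be measured against the original one of Levy--Yoshida--Pachter. Their route is to verify the $4$-point condition for the $S_{i,j}$ by a pointwise comparison over each fixed $Y$ (when $\langle a,b\,|\,c,d\rangle$ holds one has $D_{a,c,Y}+D_{b,d,Y}=D_{a,d,Y}+D_{b,c,Y}\geq D_{a,b,Y}+D_{c,d,Y}$, and summing over $Y$ gives the condition), and then to obtain the topology of $T'$ and the rescaling of the surviving edge weights by separate arguments (their Corollary 11 and Lemma 12, the latter quoted in Remark \ref{lemmaLYP}). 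Your route is genuinely different and arguably more economical: the single global decomposition $S_{i,j}=D_{i,j}(T,w')+H(i)+H(j)$ with $w'(e)=\mu_e\,w(e)$, $\mu_e=\tfrac12\bigl(\binom{a_e-2}{k-2}+\binom{b_e-2}{k-2}\bigr)$, delivers the metric statement, the description of the contracted edges ($\mu_e=0$ exactly when both sides of $e$ carry fewer than $k$ leaves, i.e.\ $e\notin T_{\leq n-k}$), and the quartet containment all at once. A strong corroboration of your coefficient count is that it reproduces exactly the factor $w(e)=2w'(e)\big/\bigl(\binom{\#L_{a,b}-2}{k-2}+\binom{\#L_{c,d}-2}{k-2}\bigr)$ that the paper imports from Lemma 12 of \cite{L-Y-P} in Remark \ref{lemmaLYP}. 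Indeed, with your decomposition Buneman's theorem becomes dispensable: contracting the edges with $\mu_e=0$ and adding $H(i)$ to the twig at $i$ exhibits ${\cal T}'$ explicitly.

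Four points need tightening, none fatal. First, your triangle inequality is not literally ``termwise'': the index sets of $Y$ differ across the three sums, since $S_{a,c}$ contains terms with $b\in Y$ that occur in neither majorizing sum as you wrote it; these must be matched with the $c\in Y$ terms of $S_{a,b}$ and the $a\in Y$ terms of $S_{b,c}$ (all three equal $D_{\{a,b,c\}\cup Y'}({\cal T})$), where positivity rather than the Steiner-union bound closes the estimate. Second, Theorem \ref{Bune} as stated yields a nonnegative-weighted realization at the leaves; to get the positive-weighted tree the statement asserts, you should verify positivity of the leaf edges, which your construction does supply: each edge $e$ contributes $\tfrac{w(e)}{2}\bigl(\binom{n-2}{k-2}-\binom{m-2}{k-2}\bigr)\geq 0$ to $H(i)$, where $m$ is the size of the side of $e$ containing $i$, with strict inequality for the twig at $i$, whence $H(i)>0$. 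Third, your identification of the realizing tree for $S$ with the contraction of $T$ silently uses uniqueness of the essential realization of a tree metric; your observation that the perturbation $H(i)+H(j)$ leaves all isolation indices of nontrivial splits unchanged is exactly the right tool, but it should be stated as the step that forces the splits of $T'$ to be those of $T$ with $\mu_e>0$. Fourth, a bookkeeping caveat on the final isomorphism: the edges with both sides of size $<k$ form a \emph{connected} subtree of $T$ (for any edge $g$ on a path between two such edges, each side of $g$ contains a full side of one of them, hence has more than $n-k$ leaves), so after contraction every edge of $T'$ lies in $T'_{\leq n-k}$, which is therefore all of $T'$ and connected, while $T_{\leq n-k}$ may be a disconnected forest; the isomorphism must thus be read at the level of the corresponding edge sets and splits (components glued at the collapsed vertex). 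This last imprecision is inherited from the statement itself as transcribed from \cite{L-Y-P}, not a defect of your argument.
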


Moreover Levy, Yoshida and  Pachter proposed a neighbor-joining  algorithm 
for reconstructing trees from $k$-weights.
To prove the first statement of Theorem \ref{LYP}, Levy, Yoshida and  Pachter proved 
that the $S_{i,j}$ satisfy the  $4$-point condition. 
It is natural to wonder if the  $4$-point condition for the $S_{i,j}$ and some other possible conditions could be sufficient 
for a family $ \{D_{I}\}_{I \in {\{1,..., n\} \choose k}}$ to be l-treelike. An easy   argument about the numbers of the $k$-weights, the numbers of 
the equations given by the $4$-point condition and the numbers of 
edges of a tree with $n$ leaves 
 suggests that the  $4$-point condition for the $S_{i,j}$ cannot be suffficient to characterize  l-treelike families.
In this paper, by using the $S_{i,j}$ defined by Levy, Yoshida and Pachter,  we give a characterization of families of real numbers parametrized by ${\{1,...,n\} \choose k}$ that are the families of $k$-weights of ip-weighted trees with leaf set equal to $\{1,...., n\}$
(see Theorem \ref{ip-l}).


\section{Notation and some remarks}

\begin{nota} \label{notainiziali}


$\bullet$  For any $n \in \N $ with $ n \geq 1$, let $[n]= \{1,..., n\}$.

$\bullet$  For any set $S$ and $k \in \mathbb{N}$,  let ${S \choose k}$
be the set of the $k$-subsets of $S$.

$\bullet $ Let $S$ be a set and $f:S \rightarrow \R$ be a function.
For any $A, B $ subsets of $S$ and any $a,b \in \R$,  we denote $a \sum_{x \in A} f(x) + b \sum_{x \in B} f(x)$ by $$\left( a\, \sum_{x \in A} + \;b \, \sum_{x \in B} \right) f(x).$$

$\bullet $ For any set $S$ and any $i \in S$ and $X \subset S$, we write $iX$ instead of $\{i\} \cup X$. 

$\bullet$ Throughout the paper, the word ``tree'' will denote a finite  tree.

$\bullet$ A {\bf node} of a tree is a vertex of degree greater than 2.

$\bullet$  Let $F$ be a leaf of a tree $T$. Let $N$ be the node 
 such that the path $p$ between $N$ and $F$ does not contain any node apart from $N$. We say that $p$ is the {\bf twig} associated to $F$.
We say that an edge is {\bf internal} if it is not an edge of a twig.
We denote by $\mathring{E}(T)$ the set of the internal edges of $T$.

$\bullet $ We say that a tree is {\bf essential} if it has no vertices of degree $2$. 

$\bullet $ If $a$ and $b$ are vertices of a tree, we denote by $p(a,b)$ the path between $a$ and $b$.

$\bullet$ Let $T$ be a tree and let $S$ be a subset of $L(T)$. We denote by $T|_S$ the minimal subtree 
of $T$ whose set of vertices  contains $S$. If ${\cal T}= (T,w)$ is a weighted tree, we denote by 
${\cal T}|_S$  the tree $T|_S$ with the weighting induced by $w$.

$\bullet$ Let $T$ be a tree, $T'$ be a subtree of $T$ and $S$ be a subtree of $T'$. Let $x \in L(T) - L(T')$. 
We say that   $x$ {\bf clings to} $S$ as to $T'$ if the minimal subtree of $T$ containing
$S$ and $x$  has no edges in common with the complementary of $S$ in $T'$. See Figure \ref{clings} for an example: let $T$ be the tree in the 
figure and let $T'=T|_{a,b,c,d}$ and  $S=p(a,b)$.
\end{nota}

\begin{figure}[h!]
\begin{center}

\begin{tikzpicture} \label{clings}
\draw [thick] (0,0) --(1,0);
\draw [thick] (0,0) --(-1,0);
\draw [thick] (1,0) --(2,1);
\draw [thick] (1,0) --(2,-1);
\draw [thick] (-1,0) --(-2,1);
\draw [thick] (-1,0) --(-2,-1);
\draw [thick] (-1.5,-0.5) --(-2.5,-0.8);
\draw [thick] (1.3,0.3) --(1.4,1);
\draw [thick] (-0.5,-0.8) --(-0.2,0);
\draw [thick] (-0.1,-0.8) --(-0.3,-0.3);
\node[above] at (-2, 1) {a};
\node[below] at (-2,-1 ) {b};
\node[above] at (2, 1) {c};
\node[below] at (2,-1 ) {d};
\node[left] at (-2.5,-0.8) {x};
\end{tikzpicture}

\caption{ $x$ clings  to $S:=p(a,b)$ as to $T':=T|_{a,b,c,d}$}
\end{center}
\end{figure}
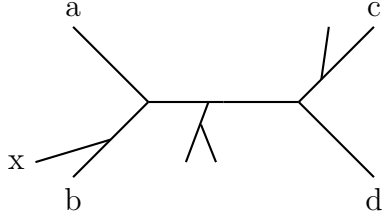

\begin{defin} \label{cherries} Let $T$ be a tree.

We say that two leaves  $i$ and $j$ of $T$ are {\bf neighbours}
if in $p(i,j)$ there is only one node; 
furthermore, we say that  $C \subset L(T)$ is a {\bf cherry} if any $i,j \in C$ are neighbours.


The {\bf stalk} of a cherry is the unique node in the path 
with endpoints any two elements of the cherry.

Let  $a,b,c,d \in L(T)$. We say that $ \langle
a, b | c, d \rangle $ holds if  in  $T|_{\{a,b,c,d\}}$ we have that $a$ and $b$ are neighbours, 
 $c$ and $d$ are neighbours, and  $a$ and $c$ are not neighbours; in this case we denote by  $\gamma_{a,b,c,d}$ the path 
between the stalk $s_{a,b}$ of $\{a,b\}$ and the stalk  $s_{c,d}$ of $ \{c,d\}$ in $T|_{\{a,b,c,d\}}$; we call it the {\bf bridge} of the quartet $(a,b,c,d)$. The symbol 
$ \langle a,b \,| \, c, d \rangle $  is called {\bf Buneman's index} of $a,b,c,d$.

\end{defin}

\begin{defin} Let $k\in \N- \{0\}$.  We say that  a tree $P$ is a {\bf pseudostar} of kind $(n,k)$ if $\# L(P)=n$ and
   any edge  of $P$ divides $L(P)$
 into two sets such that  at least one of them has cardinality   greater than or equal to $ k$.
\end{defin}

\begin{figure}[h!]
\begin{center}

\begin{tikzpicture}
\draw [thick] (0,0) --(0.2,1);
\draw [thick] (0,0) --(1.1,0.7);
\draw [thick] (0,0) --(1.5,0);
\draw [thick] (0,0) --(1.4,-0.4);
\draw [thick] (0,0) --(-1.8,-0.4);
\draw [thick] (-1.8,-0.4) --(-2.5,-0.3);
\draw [thick] (-1.8,-0.4) --(-2.2,-1);
\draw [thick] (0,0) --(-1.3,-1.3);
\draw [thick] (-1.3,-1.3) --(-2,-1.4);
\draw [thick] (-1.3,-1.3) --(-1.5,-2);
\draw [thick] (0,0) --(-0.3,-1.8);
\draw [thick] (-0.3,-1.8) --(-0.8,-2.4);
\draw [thick] (-0.3,-1.8) --(0.1,-2.4);
\end{tikzpicture}

\caption{ A pseudostar of kind $(10,8)$ \label{pseudostar}}
\end{center}
\end{figure}
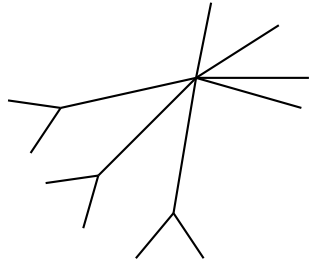

\begin{rem}
(i) A pseudostar of kind $(n,n-1) $ is a star, that is, a tree with only one node.

(ii) Let $ k,n \in \N-\{0\}$. If $\frac{n}{2} \geq k $, then every tree 
with $n$ leaves is a pseudostar of kind $(n,k)$, in fact, if we divide a set with $n$ elements into two 
parts, at least one of them has cardinality greater than or equal to $ \frac{n}{2}$, which 
is greater than or equal to $ k$. 
\end{rem}

\begin{thm} \label{main} {\bf (Baldisserri-Rubei, \cite{B-R2})} Let $n, k \in \N$ with $ 3 \leq k \leq n-1$.
Let $\{D_I\}_{I \in {[n] \choose k}}$ be a  family of  real numbers.
If it is l-treelike, then there exists exactly  one internal-nonzero-weighted
essential pseudostar ${\cal P} $ of kind $(n,k)$ realizing the family.

If the family $\{D_I\}_{I \in {[n] \choose k}}$ is  
p-l-treelike, then  ${\cal P} $ is positive-weighted.
\end{thm}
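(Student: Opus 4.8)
The plan is to prove existence constructively and uniqueness by showing that both the split system and the edge-weights of a realizing essential pseudostar are forced by the family $\{D_I\}$.

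For existence, I would start from any tree witnessing that the family is l-treelike; restricting it to the minimal subtree spanned by $1,\dots,n$ and suppressing the resulting degree-$2$ vertices, I may assume I have an essential weighted tree $T$ with $L(T)=[n]$ and $D_I(T)=D_I$ for all $I$. Call an edge $e$ of $T$ \emph{bad} if its removal splits $[n]$ into two parts both of cardinality $<k$. Every twig is good, since $k\leq n-1$ forces the large side to have at least $k$ leaves, so bad edges are internal; moreover every $k$-subset $I$ meets both sides of a bad edge (as $|I|=k$ exceeds either side), so each bad edge lies in every minimal subtree $T|_I$. A short convexity argument on paths (if two edges of a path are bad then so is every edge between them) shows the bad edges form a subtree $B$.

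I would then contract $B$ to a single vertex. Because the split induced by each remaining edge is unchanged, the contracted tree $P_0$ is an essential pseudostar of kind $(n,k)$, and since all bad edges occurred in every $T|_I$ with total weight $W:=\sum_{e\in B}w(e)$, we have $D_I(P_0)=D_I-W$ for every $I$. To correct this uniform shift I would add $W/k$ to each of the $n$ twigs: adding $\delta_i$ to the twig at $i$ changes $D_I$ by $\sum_{i\in I}\delta_i$, and requiring this to equal $W$ for all $k$-subsets forces all $\delta_i$ equal to $W/k$, which restores $D_I(P_0)+k\cdot(W/k)=D_I$. Finally I would contract any internal edge of weight $0$ (this changes no $D_I$ and preserves both essentiality and the pseudostar property), obtaining an internal-nonzero-weighted essential pseudostar realizing the family. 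In the p-l-treelike case I would run the same construction starting from a \emph{positive}-weighted witness; then $W\geq 0$, no internal edge has weight $0$, and every twig weight stays positive, so the pseudostar produced is positive-weighted, and by uniqueness it is the pseudostar, giving the second assertion.

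For uniqueness, suppose $P_1,P_2$ are two internal-nonzero essential pseudostars of kind $(n,k)$ realizing $\{D_I\}$. The key structural fact is that a pseudostar of kind $(n,k)$ equals its own subforest $P_{\leq n-k}$, since removing any edge leaves a component of size $\geq k$, hence one of size $\leq n-k$. I would first pin down the common topology: the Levy--Yoshida--Pachter numbers $S_{i,j}=\sum_{Y}D_{i,j,Y}$ are computed from $\{D_I\}$ alone, and in the positive case, via Theorem \ref{LYP} together with the uniqueness of the essential positive-weighted tree realizing a $2$-dissimilarity family (implicit in Theorem \ref{Bune}), they determine $T'_{\leq n-k}\cong P_{\leq n-k}=P$, so the split systems of $P_1$ and $P_2$ coincide. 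For the general internal-nonzero case I would instead detect each Buneman index $\langle a,b\,|\,c,d\rangle$ by a padded four-point comparison of $D_{ab Z}+D_{cd Z}$, $D_{ac Z}+D_{bd Z}$, $D_{ad Z}+D_{bc Z}$ for a suitable $(k-2)$-set $Z$ placed on the large sides (the pseudostar condition guarantees such $Z$ exist), recovering the quartet topology and hence the tree. Once $P_1$ and $P_2$ share the tree $P$, the map sending an edge-weighting $w$ to $(D_I)_I$ is linear, so $\eta:=w_1-w_2$ satisfies $\sum_{e\in T|_I}\eta(e)=0$ for all $I$; using that every edge of a pseudostar is avoided by some $k$-subset I would invert this system (peeling leaves, i.e. Möbius inversion on the split structure) to conclude $w_1=w_2$.

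The hard part will be the topology step of uniqueness in the general, not-necessarily-positive setting: the clean tools, namely the four-point condition and the Levy--Yoshida--Pachter reconstruction, are tailored to positive weights, and with arbitrary real weights cancellations can occur, so I expect the real work to lie in making the padded-quartet argument robust and in using the internal-nonzero and pseudostar hypotheses to exclude the degenerate configurations (equal-weight ties, vanishing bridges) that would otherwise spoil the recovery of the splits.
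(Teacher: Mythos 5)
This theorem is imported into the paper from \cite{B-R2} without proof, so there is no in-paper argument to compare against; your attempt has to be judged on its own merits, against what the companion paper actually establishes. The existence half of your proposal is correct and clean: bad edges (both sides of the induced split of cardinality $<k$) are internal, lie in $T|_I$ for \emph{every} $I\in{[n]\choose k}$, and form a subtree by your path-convexity observation; contracting that subtree subtracts the constant $W$ from every $k$-weight, the uniform twig correction $W/k$ restores the family, and contracting the remaining zero-weight internal edges yields an internal-nonzero essential pseudostar of kind $(n,k)$ realizing $\{D_I\}$, with positivity visibly preserved when the witness is positive-weighted. This is essentially the right construction, and the verification details you omit (essentiality after contraction, splits unchanged) are routine.

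The genuine gap is uniqueness of the \emph{topology}, and it is not a loose end but the mathematical core of the cited theorem. Two concrete problems. First, your positive-case argument via Theorem \ref{LYP} only compares two \emph{positive}-weighted pseudostars; the statement asserts uniqueness within the class of internal-\emph{nonzero} pseudostars, even for p-l-treelike families, so to obtain either assertion you must also exclude a competitor with some negative internal weights realizing the same family --- and the Levy--Yoshida--Pachter machinery is unavailable there, since with negative internal weights the $S_{i,j}$ need not satisfy the $4$-point condition (the sums defining $S_{i,j}$ can cancel). Second, the padded-quartet detector you propose for the general case is exactly what you would have to prove and do not: the difference $D_{abZ}+D_{cdZ}-D_{acZ}-D_{bdZ}$ is a signed sum of several internal-edge weights, and for any \emph{fixed} admissible $Z$ it can vanish across a genuine bridge; you would need to show that for each quartet \emph{some} choice of $Z$ is decisive, and that the decisions cohere into a single split system, which is precisely where the hypotheses $k\geq 3$, essentiality, internal-nonzeroness and the pseudostar condition must be exploited (note your plan nowhere visibly uses $k\geq 3$, without which the statement fails at the level of $2$-weights, where Bandelt--Steel realizations are not unique). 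The weight-recovery step once the tree is fixed is plausible and close in spirit to Remark \ref{wei} of this paper, but as written your proposal proves existence plus uniqueness of weights given topology, while the uniqueness of the pseudostar itself --- the substance of \cite{B-R2} --- remains an announced intention rather than a proof.
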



\section{Characterization of treelike families}

\begin{defin} \label{Sij}
Let $\{D_{I}\}_{I \in {[n] \choose k}}$ be a family of  real numbers.
For any distinct $i ,j \in [n]$, define $$S_{i,j} = \sum_{Y \in {[n] -\{i,j\} \choose k-2}} D_{i,j ,Y}  $$ 
\end{defin}

\begin{defin} \label{defL} Let $\{S_{i,j}\}$ for $i, j \in [n]$ with $i \neq j$ be a family
of real numbers.  For any distinct $a,b,c,d \in [n]$, 
let 
$$ L^{\{a,b,c,d\}}_{\{a,b\}} =\left\{x \in [n]-\{a,b,c,d\} \; \bigg| \;   \begin{array}{ll} \mbox{ either } & S_{x,z} - S_{a,z}  \mbox{ does not depend on } z \in \{b,c,d\}  
\\ \mbox{ or } &   S_{x,z} - S_{b,z}  \mbox{ does not depend on } z \in 
\{a,c,d\} \end{array}
\right\}.$$
We will denote $ L^{\{a,b,c,d\}}_{\{a,b\}}$ simply by $ L^{a,b,c,d}_{a,b}$ and 
we will omit the superscript  when the $4$-set 
which we are referring to is clear from the context.
\end{defin}

\begin{prop} \label{bridge}
Let ${\cal A} =(A,w) $ be an internal-positive-weighted essential tree. Let $S_{i,j}$ for distinct $i,j \in [n]$ be the $2$-weights of ${\cal A}$. Let $a,b,c,d \in [n]$. 

1) If $\langle a,b \, |\, c,d \rangle $  holds,  we have that $L_{a,b}$ 
is the set of the elements $x$ of $[n]$ clinging to  $p(a,b)$
as to $T|_{a,b,c,d}$ and
$L_{c,d}$ is the set of the elements 
$x$ of $[n]$ clinging to  $p(c,d)$ as to $T|_{a,b,c,d}$.

2) We have that 
$\langle a,b \, |\, c,d \rangle $ holds and  the bridge of $(a,b,c,d)$ is given by exactly one   edge if and only if the following conditions hold:

(i)  $$S_{a,b} + S_{c,d} <
 S_{a,c} + S_{b,d} = S_{a,d}+ S_{b,c}$$ 

(ii) $L_{a,b} \cup L_{c,d}= [n]$.

\end{prop}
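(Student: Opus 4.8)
The plan is to exploit that, since the $S_{i,j}$ are by hypothesis the $2$-weights of $\mathcal{A}$, they are genuine tree distances $S_{i,j}=w(p(i,j))$ on the leaf set $[n]=L(A)$, and to translate the purely algebraic conditions defining $L_{a,b}$ into geometric statements about where the leaves of $A$ attach to the quartet subtree $T'=T|_{a,b,c,d}$. First I would introduce, for each $x\in[n]-\{a,b,c,d\}$, the projection $P=P(x)$ of $x$ onto $T'$, i.e.\ the unique point of $T'$ closest to $x$. Since $a,b,c,d\in T'$, the path $p(x,z)$ passes through $P$ for every $z\in\{a,b,c,d\}$, so
\[
S_{x,z}=w(p(x,P))+w(p(P,z)) \qquad (z\in\{a,b,c,d\}).
\]
As $w(p(x,P))$ is independent of $z$, the difference $S_{x,z}-S_{a,z}$ is constant in $z$ if and only if $w(p(P,z))-w(p(a,z))$ is, and likewise with $b$ in place of $a$. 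Thus membership of $x$ in $L_{a,b}$ is governed entirely by the position of $P$ on $T'$.

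For Part 1, I would run a case analysis according to which of the five segments of the resolved quartet $T'$ (the four paths $p(a,s_{a,b})$, $p(b,s_{a,b})$, $p(c,s_{c,d})$, $p(d,s_{c,d})$ and the bridge $\gamma_{a,b,c,d}$) contains $P$. If $P$ lies on $p(a,s_{a,b})$ (possibly $P=s_{a,b}$), then $P\in p(a,z)$ for every $z\in\{b,c,d\}$, whence $w(p(P,z))-w(p(a,z))=-w(p(a,P))$ is constant and $x\in L_{a,b}$ through the first clause; symmetrically $P\in p(b,s_{a,b})$ forces $x\in L_{a,b}$ through the second. Conversely, if $P$ lies strictly inside the bridge or on one of the paths toward $c,d$ (or at $s_{c,d}$), I would compute that both differences genuinely vary: comparing $z=b$ with $z=c$ produces a gap equal to $2\,w(p(P,s_{a,b}))$ (resp.\ $2\,w(p(s_{c,d},P))$), which is strictly positive precisely because the internal edges of $\mathcal{A}$ carry positive weight. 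Hence $x\in L_{a,b}$ if and only if $P\in p(a,b)$, and $P\in p(a,b)$ is exactly the condition that the minimal subtree joining $p(a,b)$ to $x$ avoids the edges of $T'$ complementary to $p(a,b)$, i.e.\ that $x$ clings to $p(a,b)$ as to $T'$. The claim for $L_{c,d}$ follows by the symmetry $(a,b)\leftrightarrow(c,d)$. The point to keep in mind is that positivity of the internal weights is what makes the ``varies with $z$'' conclusion strict: with a zero bridge edge a leaf attaching inside the bridge could spuriously enter $L_{a,b}$.

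For Part 2, I would first record the standard four-point computation for a quartet with split $ab\mid cd$: writing $m=w(\gamma_{a,b,c,d})$ one gets $S_{a,c}+S_{b,d}=S_{a,d}+S_{b,c}=S_{a,b}+S_{c,d}+2m$. If $\langle a,b\mid c,d\rangle$ holds then $m>0$, since the bridge consists of internal edges, so (i) holds; conversely the strict inequality together with the equality in (i) rules out the other two quartet topologies and the star, forcing $\langle a,b\mid c,d\rangle$ to hold. Once the topology is fixed, Part 1 applies and (ii) says exactly that every leaf clings to $p(a,b)$ or to $p(c,d)$, i.e.\ no leaf projects into the interior of the bridge. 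Because $\mathcal{A}$ is essential, any vertex strictly between $s_{a,b}$ and $s_{c,d}$ has degree at least $3$ and hence carries a branch leading to a leaf projecting into the bridge interior; therefore (ii) holds if and only if the bridge has no interior vertex, i.e.\ is a single edge. Combining the two equivalences gives Part 2 in both directions, the logical order being essential: (i) must be invoked first to guarantee $\langle a,b\mid c,d\rangle$, so that the clinging description of $L_{a,b},L_{c,d}$ from Part 1 is legitimate before one reads off (ii).

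The step I expect to be the main obstacle is the case analysis in Part 1: every possible location of the projection $P$ must be treated and checked against each clause in the definition of $L_{a,b}$, and the argument closes only because the internal edges are positively weighted (and, for Part 2, because $\mathcal{A}$ is essential). A minor bookkeeping caveat is that $L_{a,b}\cup L_{c,d}\subseteq[n]-\{a,b,c,d\}$, so condition (ii) is to be read as covering all leaves outside $\{a,b,c,d\}$, the four distinguished leaves being accounted for trivially.
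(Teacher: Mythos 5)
Your argument is correct and is essentially the paper's own: your projection case analysis in part 1 just makes explicit the paper's one-line observation that $x\in L_{a,b}$ exactly when $x$ is a neighbour of $a$ or of $b$ in $A|_{a,b,c,d,x}$, and both directions of your part 2 (internal-positivity of the bridge weight giving (i), and essentiality producing a leaf clinging to a multi-edge bridge, hence lying outside $L_{a,b}\cup L_{c,d}$, against (ii)) coincide with the paper's proof. One computational slip, which does not affect the method: when $P$ lies on $p(s_{c,d},c)$ or $p(s_{c,d},d)$ (including $P=s_{c,d}$), the $z=b$ versus $z=c$ gap is $2w(p(P,s_{c,d}))+2w(\gamma_{a,b,c,d})$ rather than $2w(p(s_{c,d},P))$, and the extra bridge term is precisely what keeps the subcase $P=s_{c,d}$ strictly positive, so the conclusion stands.
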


\begin{proof}
1) Observe that $L_{a,b} $ is the set of the elements $x$ of $[n]$ that 
are neighbours either  of $a$ or of $b$ in $A|_{a,b,c,d,x}$; hence, 
 if $\langle a,b \, |\, c,d \rangle $  holds,  we have that $L_{a,b}$ 
is the set of the elements $x$ of $[n]$ clinging to  $p(a,b)$
as to $A|_{a,b,c,d}$. 

2)
$\Longrightarrow $ Suppose $\langle a,b \, |\, c,d \rangle $ holds and  the bridge of $(a,b,c,d)$ is given by exactly one  edge;  then  the weight of the bridge is positive, 
so (i) holds; moreover, $L_{a,b} $ 
is the set of the elements $x$ of $[n]$ clinging to  $p(a,b)$ and
$L_{c,d} $ 
is the set of the elements $x$ of $[n]$ clinging to  $p(c,d)$. 
 So (ii) must hold.

$\Longleftarrow$ 
If $ \langle a,b \, |\, c,d \rangle $ did not hold, then either $A|_{a,b,c,d}$ would be 
a star or  one of  $\langle a,c \, |\, b,d \rangle $ and  $\langle a,d \, |\, b, c \rangle $ would  hold.  So we would have either $ S_{a,b} + S_{c,d} = S_{a,d}+ S_{b,c}$ or 
 $ S_{a,b} + S_{c,d} = S_{a,c}+ S_{b,d}$, which is absurd by assumption (i).
 Hence $\langle a,b \, |\, c,d \rangle $ holds.  Moreover,
if the bridge of $(a,b,c,d)$ were given by more than one edge, then, since $A$ is essential,  there would 
exist $x \in [n]$ clinging to the bridge, and so we would have $ x \not \in L_{a,b}
\cup L_{c,d}$, which is absurd by condition (ii). 
\end{proof}

\begin{rem} \label{wei}
Let ${\cal T}=(T,w) $ be a weighted essential tree with $L(T)=[n]$ and let  $k$ be a natural number less than $n$. Let $e_i$ denote the twig associated to $i$ for any $i \in [n]$.  Then 
$$ w(e_i) = \frac{D_I({\cal T)}}{k} -\frac{1}{k}
\sum_{e \in \mathring{E} (T|_I) } w(e) + \frac{1}{k} \sum_{j \in I} \left( 
D_{j,X} ({\cal T})- D_{i,X} ({\cal T}) - \sum_{e \in \mathring{E} (T|_{jX})}
w(e) +\sum_{e \in \mathring{E} (T|_{iX})} w(e)
\right)
$$
for any $i \in [n]$,  $ I \in {[n] \choose k}$  and $X=X_{i,j} \in {[n] -\{i,j\}\choose k-1}$ (depending on $i$ and $j$). 
\end{rem}

\begin{proof}
Let $ I \in {[n] \choose k}$. Then 
\begin{equation}  \label{wei1} D_I ({\cal T}) = \sum_{i \in I} w(e_i) + \sum_{e \in \mathring{E} (T|_I) } w(e). \end{equation}
Thus, for any $i, j \in [n]$,
\begin{equation}  \label{wei2}
w(e_j) - w(e_i) = D_{j,X} ({\cal T})- D_{i,X} ({\cal T}) - \sum_{e \in \mathring{E} (T|_{jX})}
w(e) +\sum_{e \in \mathring{E} (T|_{iX})} w(e) \end{equation}
for any $X   \in {[n] \choose k-1}$ such that $i, j \not \in X$.
Obviously, for any $i \in [n]$ and any $ I \in {[n] \choose k}$, we have:
\begin{equation}  \label{wei3}
k \, w(e_i) = \sum_{j \in I} \left( w(e_i ) - w(e_j) \right)
+ \sum_{j \in I} w(e_j). \end{equation} 
From (\ref{wei1}), (\ref{wei2}) and (\ref{wei3}), we get easily our assertion. 
\end{proof}

\begin{prop} \label{T'pseudo} Let ${\cal T}$ be an ip-weighted tree with $L(T)=[n]$. 
Let $S_{i,j}$ for distinct  $i,j \in [n]$ be defined from 
the $ D_I ({\cal T})$ for  $I \in {[n ]\choose k}$
as in Definition \ref{Sij}.
Let ${\cal T}'$ be an ip-weighted  tree with $2$-weights the $S_{i,j}$ (the existence follows from Theorem \ref{Bune2} and paper \cite{L-Y-P}, in particular Corollary 11, where the assumption that {\em all} the weights of ${\cal T}$ are positive  is not necessary). 
Then $ {\cal T}'$ is a pseudostar of kind $(n,k)$.
\end{prop}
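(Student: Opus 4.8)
The plan is to verify the defining property of a pseudostar one edge at a time: for every edge of $T'$ the induced partition $A\mid B$ of $L(T')=[n]$ must satisfy $\max(\#A,\#B)\ge k$. For a twig this is immediate, since one side is a single leaf and the other has $n-1\ge k$ elements. So I would fix an internal edge $e$ of $T'$, with induced split $A\mid B$ (both sides of size $\ge 2$, as $e$ is internal), and assume for contradiction that $\#A\le k-1$ and $\#B\le k-1$. After first suppressing the degree $2$ vertices of $T$ — which changes neither the $D_I(\mathcal T)$ nor, hence, the $S_{i,j}$, and keeps $\mathcal T$ internal-positive-weighted — I may assume $T$ is essential. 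The engine of the argument is a rewriting of the $2$-weights $S_{i,j}$ through $\mathcal T=(T,w)$: expanding each $D_{i,j,Y}(\mathcal T)$ as a sum of edge weights and interchanging summations gives $S_{i,j}=\sum_{f\in E(T)}w(f)\,c_f(i,j)$, where $c_f(i,j)$ counts the $Y\in\binom{[n]-\{i,j\}}{k-2}$ whose subtree $T|_{\{i,j\}\cup Y}$ uses $f$. Writing $N=\binom{n-2}{k-2}$, a short count yields $c_f(i,j)=N-\delta^f_{i,j}$, with $\delta^f_{i,j}=\binom{s-2}{k-2}$ when $i,j$ lie on the same side (of cardinality $s$) of the split induced by $f$, and $\delta^f_{i,j}=0$ otherwise.

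Consequently, for any $a,b\in A$ and $c,d\in B$ the constant $N$ cancels and
$$S_{a,c}+S_{b,d}-S_{a,b}-S_{c,d}=\sum_{f\in E(T)}w(f)\bigl(\delta^f_{a,b}+\delta^f_{c,d}-\delta^f_{a,c}-\delta^f_{b,d}\bigr).$$
A case check on how $f$ separates $\{a,b,c,d\}$ shows the bracket vanishes unless $f$ induces a $2\mid 2$ split of this quartet, equalling $+\bigl(\binom{s_U-2}{k-2}+\binom{s_V-2}{k-2}\bigr)$ for the split $\{a,b\}\mid\{c,d\}$ (with sides of sizes $s_U,s_V$) and the negative of that quantity for $\{a,c\}\mid\{b,d\}$; the split $\{a,d\}\mid\{b,c\}$ and all $1\mid3$ or $0\mid4$ patterns contribute $0$.

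Now I read this identity both ways. On the $T'$ side: since $A\mid B$ is a split of $T'$ and $a,b\in A$, $c,d\in B$, in $T'$ we have $\langle a,b\mid c,d\rangle$ with bridge containing $e$; as $T'$ is internal-positive-weighted, the left-hand side equals twice the positive weight of that bridge, hence is $>0$. Comparing with the sum over $T$, and using that bridge edges of $T$ lie between stalks and so carry positive weight while every binomial is $\ge0$, positivity forces the quartet of $a,b,c,d$ in $T$ to be $\langle a,b\mid c,d\rangle$ as well. Since this holds for every $a,b\in A$ and $c,d\in B$, the subtrees $T|_A$ and $T|_B$ can share no vertex — a common edge or a common branch vertex would produce a quartet of the wrong topology — so $A\mid B$ is in fact a split of $T$, realized by a single internal edge $f^*$ with $w(f^*)>0$. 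Because $T$ is essential, the endpoints of $f^*$ have degree $\ge3$; choosing $a,b\in A$ in two distinct components hanging from the $A$-endpoint of $f^*$ and $c,d\in B$ likewise at the other endpoint makes the bridge of $(a,b,c,d)$ in $T$ equal to $f^*$ alone. For this choice the edge sum collapses to one term, giving
$$S_{a,c}+S_{b,d}-S_{a,b}-S_{c,d}=w(f^*)\left(\binom{\#A-2}{k-2}+\binom{\#B-2}{k-2}\right),$$
while the left-hand side is still $\ge 2w'(e)>0$ by the $T'$ reading. Hence the bracket is strictly positive, so $\binom{\#A-2}{k-2}>0$ or $\binom{\#B-2}{k-2}>0$, i.e.\ $\#A\ge k$ or $\#B\ge k$, contradicting the assumption; this proves $T'$ is a pseudostar of kind $(n,k)$.

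I expect the delicate point to be the middle step, passing from ``every quartet $\langle a,b\mid c,d\rangle$ with $a,b\in A,\ c,d\in B$ holds in $T$'' to ``$A\mid B$ is a split of $T$'', which is where the clinging description of the sets $L_{a,b}$ in Proposition \ref{bridge} can be used to keep the geometry of $T'$ and $T$ under control and to pin down the edge $f^*$. The remaining care is purely bookkeeping, chiefly checking that only internal — hence positively weighted — edges of $T$ ever contribute to the relevant differences, so that the hypothesis that $\mathcal T$ is merely internal-positive-weighted, rather than positive-weighted, is enough throughout.
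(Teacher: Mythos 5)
Your proof is correct, but it takes a genuinely different route from the paper's. The paper disposes of the main difficulty---relating the offending edge $e$ of $T'$ to the geometry of $T$---by citing machinery: by Theorem \ref{LYP} (in its ip-weighted form) the quartet system of $T'$ is contained in that of $T$, so by Bryant--Steel's theorem $T'$ is a contraction of $T$, and $e$ corresponds outright to an edge of $T$ splitting $[n]$ into two parts of size $<k$; it then chooses a quartet whose bridge is that single edge and shows $S_{a,b}+S_{c,d}=S_{a,c}+S_{b,d}$ by a termwise identity: since both sides of the split have size $\le k-1$ while $\#(E\cup\{a,b,c,d\})=k+2$, every $E\in\binom{[n]-\{a,b,c,d\}}{k-2}$ straddles the edge, and for each such $E$ one checks $D_{a,b,E}+D_{c,d,E}=D_{a,c,E}+D_{b,d,E}$ directly, forcing $w'(e)=0$, absurd. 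You avoid the external results entirely: your expansion $S_{i,j}=\sum_f w(f)\bigl(N-\delta^f_{i,j}\bigr)$ with $\delta^f_{i,j}=\binom{s-2}{k-2}$ is the dual (sum-over-edges rather than sum-over-$E$) of the same counting that underlies both the paper's termwise identity and the Levy--Yoshida--Pachter weight formula quoted in Remark \ref{lemmaLYP}; but the step where you force every quartet across $A\mid B$ to have topology $\langle a,b\mid c,d\rangle$ in $T$ via positivity, deduce disjointness of $T|_A$ and $T|_B$, and then use essentiality to pin the split down to a single edge $f^*$ is a self-contained substitute for the quartet-system/contraction argument, and your endgame (the bracket $\binom{\#A-2}{k-2}+\binom{\#B-2}{k-2}$ must be positive yet vanishes when $\#A,\#B\le k-1$) is the contrapositive form of the paper's vanishing identity. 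I checked the delicate points: the bracket case analysis is right (patterns $4\mid 0$, $3\mid 1$, and the split $\{a,d\}\mid\{b,c\}$ all contribute $0$, and the two relevant $2\mid 2$ patterns contribute with opposite signs); the $T'$-side bound $\ge 2w'(e)>0$ is legitimate because all bridge edges separate at least two leaves from at least two, hence are internal and positively weighted, which is also exactly why the ip-hypothesis (rather than full positivity) suffices on the $T$-side; the endpoints of $f^*$ cannot be leaves since $\#A,\#B\ge 2$, so essentiality gives the degree-$\ge 3$ branching you need to collapse the sum to the single term $f^*$. What each approach buys: the paper's proof is shorter by leaning on \cite{L-Y-P} and \cite{BS}, and the contraction viewpoint identifies the bad edge of $T'$ with an edge of $T$ in one stroke; yours is longer but self-contained, makes the role of internal edges and of the ip-hypothesis completely explicit, and the split-lifting lemma (every cross-quartet resolving as $\langle a,b\mid c,d\rangle$ forces $A\mid B$ to be a split of $T$) is of independent use.
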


\begin{proof} Suppose, contrary to our claim, that there exists 
 an edge $e$ of $T'$ dividing $L(T')=[n]$ into two parts both of cardinality less than $k$. By Theorem \ref{LYP}, or more precisely 
 by the analogous statement for ip-weighted trees, 
the quartet system of $T'$ is contained in the quartet system of $T$,
so $T'$ is obtained from $T$ by contracting some edges (see Theorem 1 in \cite{BS}); thus $e$
     corresponds to an edge of  $T$ dividing $L(T)=[n]$ into two parts both of cardinality less than $k$. We can suppose $e$ is $\gamma_{a,b,c,d}$ for some
$a,b,c,d \in [n]$ such that $\langle a,b\,|\, c,d \rangle $ holds. Denote $s_{a,b}$ by $t$ 
ad $ s_{c,d}$ by $s$ (see Definition \ref{cherries}).
We want to show that 
\begin{equation} \label{eqbo}
  S_{a,b} + S_{c,d} = S_{a,c} + S_{b,d}
\end{equation}
(which is absurd since it implies that the weight of $e$ is equal to $0$).
  Obviously $S_{a,b}$ is equal to 
$$  \sum_{E \in {[n]-\{a,b,c,d\} \choose k-2}} D_{a,b,E}({\cal T}) + \sum_{E \in {[n]-\{a,b,c,d\} \choose k-3}} D_{a,b,c,E}({\cal T}) + \sum_{E \in {[n]-\{a,b,c,d\} \choose k-3}} D_{a,b,d,E}({\cal T}) +\sum_{E \in {[n]-\{a,b,c,d\} \choose k-4}} D_{a,b,c,d,E}({\cal T})$$
and analogously $S_{c,d}$, $S_{a,c}$ and $S_{b,d}$. Hence 
(\ref{eqbo}) is equivalent to 
\begin{equation} \label{eqbo2} 
 \sum_{E \in {[n]-\{a,b,c,d\} \choose k-2}} (D_{a,b,E}({\cal T}) 
+ D_{c,d,E}({\cal T}))= 
 \sum_{E \in {[n]-\{a,b,c,d\} \choose k-2}} (D_{a,c,E}({\cal T}) 
+ D_{b,d,E}({\cal T})).
\end{equation}
We can write  $E \in {[n]-\{a,b,c,d\} \choose k-2}$ as disjoint union of 
$E_a, E_b, E_c, E_d , E_t, E_s $, where: 

$E_{a} = \{x \in E |   \;x \; \mbox{\rm   clings to } p(a, t)-  \{t\} \; \mbox{\rm as to }    T|_{a,b,c,d}\}$

and analogously $E_b$, $E_c$, $E_d$,

$E_t = \{x \in E | \; x \; \mbox{\rm  clings to }  t \; \mbox{\rm as to }    T|_{a,b,c,d}\}$

and analogously $E_s$. By our assumption that $e$ divides $L(T)=[n]$ into two parts both of cardinality less than $k$, we have that  $E_a \cup E_b \cup
E_t \neq  \emptyset $ and   $E_c \cup E_d \cup E_s \neq  \emptyset $, in fact: define $A = E_a \cup E_b \cup E_t \cup \{a,b\}$ and 
$B=E_c \cup E_d \cup E_s \cup \{c,d\}$; we have that $E \cup \{a,b,c,d\}$ is the (disjoint) union of  
$A$ and $ B$, hence $ \#(A \cup B) = \#(E \cup \{a,b,c,d\})=k+2$; moreover $\#A \leq k-1$, $\#B \leq k-1$, therefore
   $\#A \geq 3 $ and $\#B \geq 3$, which gives the desired conclusion.

So  we get:
$$ D_{a,b,E}({\cal T}) = w(p(a,t)) +  w(p(b,t)) +w(e) +w(p(s, \overline{E_c}))
+w(p(s, \overline{E_d}))$$ $$ 
+ w\big(T|_{E_a ,t} -p(a,b)\big) + w\big(T|_{E_b ,t} -p(a,b)\big) + w\big(T|_{E_c ,s} -p(c,d)\big) +
 w\big(T|_{E_d ,s} -p(c,d)\big) , $$
where $\overline{E_c}$ is the vertex of $T|_{E_c ,s} \cap p(s,c)$ which is the most far from $s$ and analogously  $\overline{E_d}$. 
We can write $ D_{a,c,E}({\cal T})$,  $ D_{b,d,E}({\cal T})$,
$ D_{c,d,E}({\cal T})$ in an analogous way and we get that 
 $$D_{a,b,E}({\cal T}) + D_{c,d,E}({\cal T})= 
D_{a,c,E}({\cal T}) + D_{b,d,E}({\cal T}).$$
So (\ref{eqbo2}) holds.
\end{proof}

\begin{rem} \label{lemmaLYP}
 Let ${\cal T}$ be an ip-weighted tree with $L(T)=[n]$. 
Let $S_{i,j}$ for $i,j \in [n]$ be defined from 
the $ D_I ({\cal T})$ for  $I \in {[n ]\choose k}$
as in Definition \ref{Sij}.
Let ${\cal T}'=(T',w')$ be an essential ip-weighted  tree
 with $\{S_{i,j}\}$ as family of the $2$-weights (the existence follows from
 Theorem \ref{Bune2} and  paper \cite{L-Y-P}, in particular Corollary 11).
 It is a pseudostar of kind $(n,k)$ by Proposition \ref{T'pseudo} (so it is 
 equal to $T'_{\leq n-k} \cong T_{\leq n-k}$).
Let $e $ be an internal edge of $T'$ and let $a,b,c,d \in [n]$ such that
$\langle a,b\, |\, c,d \rangle $ holds and the bridge of $(a,b,c,d) $ is given only by the edge $e$; in \cite{L-Y-P} (in particular see Lemma 12), the authors proved that  
$$w(e)=  \frac{2w'(e)  }{ {\# L_{a,b}-2 \choose k-2}+ {\# L_{c,d}-2 \choose k-2}}$$
\end{rem}

\begin{thm} \label{ip-l}
Let $\{D_{I}\}_{I \in {[n] \choose k}}$ be a family in $\R$.
Let $S_{i,j}$ be defined from the family  $\{D_{I}\}_{I \in {[n] \choose k}}$ 
as in Definition \ref{Sij} and see Definition 
  \ref{defL} for the definition of $L_{a,b}$.
For any $W \in {[n] \choose k}$, let us denote
$$Q(W)= \left\{(a,b,c,d) \in {W\choose 4} \;| \;  \;\;
S_{a,b} + S_{c,d}
< S_{a,c} + S_{b,d}= S_{a,d} + S_{b,c}, \;\;\;L_{a,b} \cup L_{c,d}=[n] \right\}/ \sim$$ where 
$(a,b,c,d) \sim  (a',b',c',d')$ if and only if, up to swapping 
$\{a,b\}$ with $\{c,d\}$,  we have that $$\{a,b\} \subset L_{a',b'}, 
 \;\; \{c,d\} \subset L_{c',d'},\;\;\{a',b'\} \subset L_{a,b} , \;\;\{c',d'\} \subset L_{c,d}$$
 The family $\{D_{I}\}_{I \in {[n] \choose k}}$
 is ip-l-treelike if  and only if the following conditions hold:

(i) the $S_{i,j}$ for distinct $i,j \in [n]$ satisfy the   $4$-point condition;

(ii)  for any distinct $ a,b,c,d \in [n]$ such that 
$  S_{a,c} + S_{b,d} = S_{a,d} + S_{b,c}$, $L_{a,b} \cup L_{c,d}= [n]$ and 
$\# L_{a,b} < k$, $ \#L_{c,d} < k$, we have:  
\begin{equation} \label{eqabcd}
  S_{a,b} + S_{c,d} =  S_{a,c} + S_{b,d} \, ; \end{equation}

(iii)
 for any $I \in {[n] \choose k}$, we have: 
 $$ \sum_{i,j \in I} \left[ \frac{  D_{j,X} - D_{i,X}}{2}   + \left(\sum_{[(a,b,c,d) ]\in Q(iX) }
  -  \sum_{[(a,b,c,d)] \in Q(jX) } \right)
\frac{S_{a,c} + S_{b,d} -  S_{a,b} - S_{c,d}  }{ {\# L_{a,b}-2 \choose k-2}+ {\# L_{c,d}-2 \choose k-2}}
 \right]=0,$$
where
 $X = X_{i,j} $ is any element  of $ { {[n] -\{i,j\}\choose k-1 }}$ (so it depends on $i$ and $j$).

(iv) for any $i \in [n]$, 
 $$ D_I+ \sum_{j \in I} \left( 
D_{j,X} - D_{i,X} \right)+ \left[ \sum_{j \in I}  
 \left(\sum_{[(a,b,c,d) ]\in Q(iX) }  - \sum_{[(a,b,c,d) ]\in Q(jX) }\right) 
 -
\sum_{[(a,b,c,d) ]\in Q(I) }\right]
\frac{S_{a,c} + S_{b,d} -  S_{a,b} - S_{c,d}  }{ {\# L_{a,b}-2 \choose k-2}+ {\# L_{c,d}-2 \choose k-2}}
$$
    does not depend on $ I \in {[n] \choose k}$  and $X=X_{i,j} \in  {[n] -\{i,j\}\choose k-1}$.

\end{thm}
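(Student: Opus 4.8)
The plan is to route both implications through the tree $\mathcal{T}'=(T',w')$ whose $2$-weights are the numbers $S_{i,j}$: under hypothesis (i) such an internal-positive-weighted essential tree exists by Theorem \ref{Bune2} (and \cite{L-Y-P}), and it will carry the topology of the tree we are after, so that everything reduces to weighting $T'$ correctly. Throughout, for a quartet $(a,b,c,d)$ admitting a one-edge bridge I abbreviate $\omega([(a,b,c,d)]):=(S_{a,c}+S_{b,d}-S_{a,b}-S_{c,d})/(\binom{\#L_{a,b}-2}{k-2}+\binom{\#L_{c,d}-2}{k-2})$, the quantity occurring in (iii) and (iv); by Remark \ref{lemmaLYP} this is exactly the weight that such an edge must carry in any realizing tree.

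For the forward implication, assume $\{D_I\}$ is realized by an ip-weighted essential tree $\mathcal{T}=(T,w)$. Then the $S_{i,j}$ are the $2$-weights of the tree produced in \cite{L-Y-P}, hence satisfy the $4$-point condition, which is (i). By Proposition \ref{T'pseudo}, $T'$ is a pseudostar of kind $(n,k)$; this is precisely what yields (ii), for if $a,b,c,d$ satisfied the hypotheses of (ii) but (\ref{eqabcd}) failed, the $4$-point condition would force $S_{a,b}+S_{c,d}<S_{a,c}+S_{b,d}=S_{a,d}+S_{b,c}$, and then Proposition \ref{bridge} would give $\langle a,b\,|\,c,d\rangle$ with a one-edge bridge separating $[n]$ into the parts governed by $L_{a,b}$ and $L_{c,d}$, both of size $<k$ --- impossible in a pseudostar of kind $(n,k)$. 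Finally I read off (iii) and (iv) from Remark \ref{wei}: substituting the internal weights (via Remark \ref{lemmaLYP}, i.e.\ the $\omega$'s) and the twig weights into (\ref{wei2}) and (\ref{wei1}), the tautologies that $w(e_j)-w(e_i)$ and that $w(e_i)$ are well defined become exactly the identities (iii) and (iv).

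For the converse I build the tree on $T'$. Hypothesis (i) and Theorem \ref{Bune2} give the essential ip-weighted tree $\mathcal{T}'=(T',w')$ with $2$-weights $S_{i,j}$. Reading the previous paragraph backwards, hypothesis (ii) forbids any edge of $T'$ from splitting $[n]$ into two parts both of size $<k$: such an edge would, by Proposition \ref{bridge}, be the one-edge bridge of a quartet with $L_{a,b}\cup L_{c,d}=[n]$ and $\#L_{a,b},\#L_{c,d}<k$, whence (ii) would impose the equality (\ref{eqabcd}), contradicting the strict inequality that a genuine bridge forces. Hence $T'$ is a pseudostar of kind $(n,k)$. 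I then weight $T'$: to an internal edge $e$, represented by a class $[(a,b,c,d)]\in Q([n])$, I assign $w(e):=\omega([(a,b,c,d)])$, which is well defined because $\sim$-equivalent quartets determine the same edge and the same unordered pair of sets $\{L_{a,b},L_{c,d}\}$, and positive because its numerator is $2w'(e)>0$ while the pseudostar property keeps the binomial denominator a positive integer.

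It remains to fix the twig weights, verify the $k$-weights, and here lies the main difficulty. The engine is a structural identity: for each $I\in\binom{[n]}{k}$, sending a class to its bridge should be a bijection from $Q(I)$ onto the internal edges of $T'|_I$, so that $\sum_{[(a,b,c,d)]\in Q(I)}\omega=w(\mathring{E}(T'|_I))$, and similarly for $Q(iX)$ and $Q(jX)$. Granting it, condition (iii) is exactly the closure condition making the differences prescribed by (\ref{wei2}) consistent and independent of the auxiliary set $X$, so that twig weights $w(e_i)$ exist; condition (iv), which is Remark \ref{wei}'s formula for $k\,w(e_i)$, then fixes each $w(e_i)$ unambiguously. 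Summing that formula over $i\in I$, the terms indexed by the ordered pairs $i,j\in I$ cancel antisymmetrically while the identity above collapses the remainder, yielding $\sum_{i\in I}w(e_i)+w(\mathring{E}(T'|_I))=D_I$; by (\ref{wei1}) this says $D_I(\mathcal{T}')=D_I$, so $\{D_I\}$ is ip-l-treelike. I expect the hard part to be exactly this structural identity together with the invariance of $\omega$ on $\sim$-classes: one must prove that every internal edge of $T'|_I$ is the one-edge bridge of a quartet drawn from $I$ and that $\sim$ groups precisely the quartets sharing a bridge, combining the essentiality and pseudostar property of $T'$ with the interplay between the local subtree $T'|_I$ and the global sets $L_{a,b}$. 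Once this is secured, (iii) and (iv) become verbatim the well-definedness of the twig differences and of the twig weights, and the equivalence follows.
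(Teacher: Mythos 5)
Your architecture coincides with the paper's proof at every joint: in the forward direction, (i) is quoted from \cite{L-Y-P}, (ii) is obtained by the same contradiction through Propositions \ref{bridge} and \ref{T'pseudo}, and (iii)--(iv) by substituting the weights of Remark \ref{lemmaLYP} into the identities of Remark \ref{wei}; in the converse, you build the tree on $T'$, use (ii) to force the pseudostar property, weight the internal edges by $\omega$, read the twig weights off (iv), and verify the $k$-weights through (iii), exactly as the paper does. Moreover, the well-definedness half of what you call the structural identity is genuinely easy and is handled by the paper in two sentences via Proposition \ref{bridge}: if the bridge of $(a,b,c,d)$ is the single edge $e$, then $L_{a,b}\cup\{a,b\}$ and $L_{c,d}\cup\{c,d\}$ are precisely the two leaf-sets that $e$ separates, so $\sim$-equivalent quartets determine the same edge, the cardinalities $\#L_{a,b}$, $\#L_{c,d}$ depend only on $e$, and the numerator of $\omega$ equals $2w'(e)$.

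The genuine gap is the surjectivity half, which you correctly isolate as the crux but then merely ``grant'': the claim that every internal edge of $T'|_I$, for $I\in\binom{[n]}{k}$, is the one-edge bridge of a quartet drawn from $I$, so that $\sum_{[(a,b,c,d)]\in Q(I)}\omega$ accounts for $\sum_{e\in\mathring{E}(T'|_I)}w(e)$. Both your conditions (iii) and (iv) rest entirely on this, and it is false as you stated it. For $e=uv\in\mathring{E}(T'|_I)$ to be the exact bridge of some $(a,b,c,d)\in\binom{I}{4}$, two elements of $I$ must lie in distinct branches of $T'$ at $u$ and two in distinct branches at $v$; this fails whenever an endpoint of $e$ has degree $2$ in $T'|_I$ while having degree $\geq 3$ in $T'$, in which case every quartet from $I$ whose bridge contains $e$ has a bridge of length $\geq 2$ and violates $L_{a,b}\cup L_{c,d}=[n]$ (leaves hanging off the interior of the bridge belong to neither $L$-set). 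Concretely: let $n=10$, $k=5$, and let $T'$ have internal path $u$--$v$--$z$ with leaves $1,2$ adjacent to $u$, leaf $10$ adjacent to $v$, and leaves $3,\dots,9$ adjacent to $z$. This is an essential pseudostar of kind $(10,5)$, yet for $I=\{1,2,3,4,5\}$ one checks $Q(I)=\emptyset$ (quartets $\{1,2,x,y\}$ with $x,y\in\{3,4,5\}$ have the two-edge bridge $u$--$v$--$z$ and miss $10$ in $L_{1,2}\cup L_{x,y}$; all other quartets from $I$ restrict to stars, so the strict inequality fails), while $\mathring{E}(T'|_I)=\{uv,vz\}$ carries positive weight. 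So the identity you defer cannot be secured in the naive form, and the verification of (iii) and (iv) collapses at exactly the point you flagged. To be fair, the paper's own proof disposes of this step with the one-line assertion that ``an internal edge of ${\cal T}|_W$ corresponds to an element of $Q(W)$,'' which glosses the same difficulty; but a complete proof along this route must either restrict which vertices of $T'|_I$ can have degree $2$ or reinterpret the sums over $Q(iX)$, $Q(jX)$, $Q(I)$ so that the discrepancies cancel, and neither you nor the paper supplies that argument. Your proposal is therefore incomplete precisely at its self-identified main difficulty.
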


\begin{proof}
$\Longrightarrow$ Let ${\cal T}=(T,w)$ be  an ip-weighted tree with $L(T)=[n]$ and  realizing the family.
Condition (i) has been proved in \cite{L-Y-P}. Condition
(ii)  follows from Propositions \ref{bridge} and  \ref{T'pseudo} and (i), in fact: 
let $ a,b,c,d $ be distinct elements of $[n]$  such that 
$  S_{a,c} + S_{b,d} = S_{a,d} + S_{b,c}$, $L_{a,b} \cup L_{c,d}= [n]$, 
$\# L_{a,b} < k$ and $ \#L_{c,d} < k$; by condition (i), we have that 
$  S_{a,b} + S_{c,d} \leq   S_{a,c} + S_{b,d} $; let ${\cal T}' =( T',w') $ be  
the ip-weighted essential tree  with $L(T') =[n]$ and such that the $2$-weights are equal to the $S_{i,j}$; 
if, contrary to our claim, we had $  S_{a,b} + S_{c,d} <  S_{a,c} + S_{b,d} $,
then, by Proposition \ref{bridge},  $\langle a,b \, |\, c,d \rangle $ would hold and the bridge of $(a,b,c,d) $ would be given by exactly one edge;
since $\# L_{a,b} < k$ and $ \#L_{c,d} < k$, we would have that  ${\cal T}' $  is not a pseudostar of kind $(n,k)$, but this is absurd by 
Proposition \ref{T'pseudo}.

Let us prove  (iii). We have: $$ D_I({\cal T})=  
   \sum_{i \in I} w(e_i) + \sum_{e \in \mathring{E} (T|_I) } w(e)
=   $$  $$=
  \sum_{i \in I} \left(
  \frac{D_I({\cal T)}}{k} -\frac{1}{k}
\sum_{e \in \mathring{E} (T|_I) } w(e) + \frac{1}{k} \sum_{j \in I} \left( 
D_{j,X} ({\cal T})- D_{i,X} ({\cal T}) - \sum_{e \in \mathring{E} (T|_{jX})}
w(e) +\sum_{e \in \mathring{E} (T|_{iX})} w(e)
\right) \right) +$$ 
$$+ \sum_{e \in \mathring{E} (T|_I) } w(e)=$$
$$=  D_I({\cal T)}-
\sum_{e \in \mathring{E} (T|_I) } w(e) + \frac{1}{k}  \sum_{i,j \in I} \left( 
D_{j,X} ({\cal T})- D_{i,X} ({\cal T}) - \sum_{e \in \mathring{E} (T|_{jX})}
w(e) +\sum_{e \in \mathring{E} (T|_{iX})} w(e)
\right)  + \sum_{e \in \mathring{E} (T|_I) } w(e),$$
  where 
 $X = X_{i,j} $ is any element  of $ { {[n] -\{i,j\}\choose k-2 }}$ (so it depends on $i$ and $j$) and
  the second equality holds by Remark \ref{wei}. Hence  
  \begin{equation} \label{terza} \sum_{i,j \in I} \left( 
D_{j,X} ({\cal T})- D_{i,X} ({\cal T}) - \sum_{e \in \mathring{E} (T|_{jX})}
w(e) +\sum_{e \in \mathring{E} (T|_{iX})} w(e)
\right) =0. \end{equation}
 By Proposition \ref{bridge}, we have that, for any distinct $a,b,c,d \in [n]$, we have that 
  $\langle a,b \,|\, c,d \rangle $ holds and 
 the bridge of $(a,b,c,d)$ is given by exactly one edge if and only if $
S_{a,b} + S_{c,d} < S_{a,c} + S_{b,d}= S_{a,d} + S_{b,c}$ and $ 
L_{a,b} \cup L_{c,d}= [n]$. Moreover, given $a,b,c,d,a',b',c',d'$ such that 
  $\langle a,b \,|\, c,d \rangle $ holds and 
 the bridge of $(a,b,c,d)$ is given by exactly one edge and analogously 
 for $a',b',c',d'$,  we have that
$(a,b,c,d) $ and $(a',b',c',d')$
    give the same edge if and only if they are equivalent. So,
    for any $W \in {[n] \choose k}$, an internal edge of ${\cal T}|_W$ corresponds to an element of $Q(W)$. Hence, 
      from (\ref{terza}) and Lemma 12 in \cite{L-Y-P} (see Remark \ref{lemmaLYP}), we get   condition (iii). Finally, by Remark \ref{wei}, the fact that an internal edge of ${\cal T}|_W$ corresponds to an element of $Q(W)$ and  Lemma 12 in \cite{L-Y-P} (see Remark \ref{lemmaLYP}),  we get (iv).
    
$\Longleftarrow$ Let ${\cal T}' =(T',w')$ be an essential ip-weighted tree with    $2$-weights equal to the $S_{i,j}$ 
(it exists by condition (i) and Theorem \ref{Bune2}). It is a pseudostar of kind $(n,k)$ by condition (ii), in fact: let $e$ be an internal edge of  $ T'$; let $a,b,c,d \in [n]$ be such that  $\langle a,b, \, |\, c,d \rangle$
holds and the bridge of $(a,b,c,d)$ is given only by $e$; then 
 $  S_{a,b} + S_{c,d} <  S_{a,c} + S_{b,d} = S_{a,d} + S_{b,c}$ and 
$L_{a,b} \cup L_{c,d}= [n]$; if, contrary to our claim, we had 
$\# L_{a,b} < k$, $ \#L_{c,d} < k$, then by (ii), we would get a contradiction.

  Let ${\cal T}
=(T,w)$ be the weighted tree with $T=T'$ and where $w$ is defined as follows: for any $e \in  \mathring{E} (T') $, let $a,b,c,d \in [n] $ be such that $\langle a,b \,|\, c,d \rangle $ holds and the bridge of $(a,b,c,d) $ is $e$; define $$w(e)= \frac{2w'(e)}{{\# L_{a,b}-2 \choose k-2}+ {\#L_{c,d}-2 \choose k-2}} ;$$
hence  $$w(e)= 
\frac{S_{a,c} + S_{b,d} -  S_{a,b} - S_{c,d}  }{ {\# L_{a,b}-2 \choose k-2}+ {\# L_{c,d}-2 \choose k-2}};$$
moreover, for any $i \in [n]$, define 
$$ w(e_i) = \frac{D_I}{k} -\frac{1}{k}
\sum_{e \in \mathring{E} (T|_I) } w(e) + \frac{1}{k} \sum_{j \in I} \left( 
D_{j,X} - D_{i,X}  - \sum_{e \in \mathring{E} (T|_{jX})}
w(e) +\sum_{e \in \mathring{E} (T|_{iX})} w(e)
\right)$$
for any $ I \in {[n] \choose k}$  and any $X=X_{i,j} \in {[n] -\{i,j\}\choose k-1}$ (so it depends on $i$ and $j$). Observe that it is a good definition by condition (iv).
We have to show that $D_{I}({\cal T})= D_I$ for any $I  \in {[n] \choose k}$. We have: 
  $$ D_I({\cal T})=  
   \sum_{i \in I} w(e_i) + \sum_{e \in \mathring{E} (T|_I) } w(e)
=   $$  $$=  \sum_{i \in I} \left(
  \frac{D_I}{k} -\frac{1}{k}
\sum_{e \in \mathring{E} (T|_I) } w(e) + \frac{1}{k} \sum_{j \in I} \left( 
D_{j,X} - D_{i,X}  - \sum_{e \in \mathring{E} (T|_{jX})}
w(e) +\sum_{e \in \mathring{E} (T|_{iX})} w(e)
\right) \right) +$$ 
$$+ \sum_{e \in \mathring{E} (T|_I) } w(e)=$$
$$=  D_I-
\sum_{e \in \mathring{E} (T|_I) } w(e) + \frac{1}{k}  \sum_{i,j \in I} \left( 
D_{j,X} - D_{i,X}  - \sum_{e \in \mathring{E} (T|_{jX})}
w(e) +\sum_{e \in \mathring{E} (T|_{iX})} w(e)
\right)  + \sum_{e \in \mathring{E} (T|_I) } w(e),$$
  where 
 $X = X_{i,j} $ is any element  of $ { {[n] -\{i,j\}\choose k-2 }}$ (so it depends on $i$ and $j$) and
  the second equality holds by the definition of $w(e_i)$.
  So  $D_{I} ({\cal T})= D_I$ if and only if 
 $$  \sum_{i,j \in I} \left( 
D_{j,X} - D_{i,X}  - \sum_{e \in \mathring{E} (T|_{jX})}
w(e) +\sum_{e \in \mathring{E} (T|_{iX})} w(e)
\right)  =0,$$
 which is true by the definition of the weight of the internal edges and  by assumption (iii).
\end{proof}

\begin{rem} It is easy to get from Theorem \ref{ip-l} a characterization
also for p-l-treelike families.
Obviously a family  $\{D_{I}\}_{I \in {[n] \choose k}}$ is p-l-treelike if and only if conditions (i), (ii), (iii), (iv) of Theorem \ref{ip-l} hold and, in addition, the number displayed in (iv) is positive for any $i \in [n]$.

\end{rem}

{\bf Acknowledgements.}
This work was supported by the National Group for Algebraic and Geometric Structures and their  Applications (GNSAGA-INdAM). 
The first author was supported by Ente Cassa di Risparmio di Firenze.

{\small }

\bigskip

{\bf Address:}
Dipartimento di Matematica e Informatica ``U. Dini'',

viale Morgagni 67/A,
50134  Firenze, Italia

\end{document}